\documentclass{amsart}
\usepackage{amssymb,amsthm}
\usepackage{color}
\newtheorem{theorem}{Theorem}
\newtheorem{lemma}[theorem]{Lemma}
\newtheorem{proposition}[theorem]{Proposition}
\newtheorem{definition}[theorem]{Definition}

\author[D.~Bubboloni]{Daniela Bubboloni}
\address{Daniela Bubboloni, Dipartimento di Matematica per le Decisioni,
University of Firenze,\newline Via delle Pandette, 9-D6, 50127 Firenze, Italy}
\email{daniela.bubboloni@unifi.it}

\author[F.~Luca]{Florian Luca}
\address{Florian Luca,
Instituto de Matem\'aticas de la UNAM\\
Campus Morelia, Apartado Postal 61--3 (Xangari), CP 58 089,
Morelia, Michoac\'an, Mexico}
\email{fluca@matmor.unam.mx}

\author[P. Spiga]{Pablo Spiga}
\address{Pablo Spiga,
Dipartimento di Matematica e Applicazioni, University of Milano-Bicocca,\newline
Via Cozzi 53, 20125 Milano, Italy}
\email{pablo.spiga@unimib.it}

\thanks{Address correspondence to P. Spiga,
E-mail: pablo.spiga@unimib.it\\
The first author is supported by the MIUR project ``Teoria dei gruppi
ed applicazioni''. The third author is partially supported by the University of Western Australia
as part of the  Federation Fellowship project FF0776186.}

\subjclass[2000]{05A17, 11P81}
\keywords{compositions; coprime summand; pairwise coprime}

\begin{document}

\title{Compositions of $n$ satisfying some coprimality conditions}

\begin{abstract}

An $\ell$-composition of $n$ is a sequence of length $\ell$ of positive integers summing up to $n$. In this paper, we investigate the number of $\ell$-compositions of $n$ satisfying two natural coprimality conditions. Namely, we first give an exact asymptotic formula for the number of $\ell$-compositions  having the first summand coprime to the others. Then, we estimate the number of $\ell$-compositions whose summands are all pairwise coprime.
\end{abstract}

\maketitle

\pagenumbering{arabic}

\section{Introduction}

Given a positive integer $n\in \mathbb{N}$, in this paper we are interested on the size of two sets of \emph{compositions} of $n$ both satisfying some natural coprimality conditions. For $k\geq 1$, the first set consists of the $(k+1)$-compositions $(x,y_1,\ldots,y_k)$ of $n$ with $x$ coprime to $y_i$, for each $i\in \{1,\ldots,k\}$. We denote this set by $\mathcal{A}_k(n)$ and its size by $A_k(n)$, that is,

\begin{equation*}
\begin{aligned}
\mathcal{A}_k(n)&=\{(x,y_1,\dots,y_k)\in  \mathbb{N}^{k+1}:\ n=x+y_1+\cdots+y_k,\  \gcd(x,y_1\cdots y_k)=1\},\\
A_k(n)&=\#\mathcal{A}_k(n).
\end{aligned}
\end{equation*}
Observe that $\mathcal{A}_k(n)=\varnothing$ when $n<k+1$ and that $\mathcal{A}_k(n)$ is a singleton if $n=k+1.$ Thus, we will assume that $n>k+1.$ In particular if $k\geq 2$ we will assume that $n\geq 4.$

For $k\geq 2$, the second set consists of the $k$-\emph{compositions} $(x_1,\ldots,x_{k})$ of $n$ with $x_i$ coprime to $x_j$, for every two distinct elements $i,j\in \{1,\ldots,k\}$. We denote this set by $\mathcal{B}_k(n)$ and its size by $B_k(n)$, that is,
\begin{equation*}
\begin{aligned}
\mathcal{B}_k(n)&=\{(x_1,\ldots,x_k)\in  \mathbb{N}^{k}: n=x_1+\cdots+x_k,\  \gcd(x_i,x_j)=1,~1\leq i< j\leq k\},\\
B_k(n)&=\#\mathcal{B}_k(n).
\end{aligned}
\end{equation*}
Since for $n=k$, the set $\mathcal{B}_k(n)$ a singleton, dealing with $\mathcal{B}_k(n)$ we will assume that $n>k.$
Our goal is to give an exact asymptotic estimate for $A_k(n)$ and $B_k(n)$, as functions of $n$ and $k$. We clearly have $A_1(n)=B_2(n)=\varphi(n)$ (the Euler totient function) and the asymptotic behaviour of $\varphi(n)$ is well-understood. Before stating our main results we need the following definition. Throughout the paper, we use $p$ and $q$ for primes.

\begin{definition}
\label{def}
For positive integers $k$ and $n$, define
\begin{equation*}
\begin{aligned}
&\psi_k(x)=\frac{x^k-(x-1)^k+(-1)^k}{x},
&&\delta_k(x)=\frac{(x-1)^{k}+k(x-1)^{k-1}+(-1)^{k}(k-1)}{x},\\
&C_k=\prod_{p} \left(1-\frac{\psi_k(p)}{p^{k}}\right),
&&D_k=\prod_{p}\frac{\delta_k(p)}{p^{k-1}},\\
&f_k(n)=\prod_{p\mid n} \left(1+\frac{(-1)^k}{p^k-\psi_k(p)}\right),
&&g_k(n)=\prod_{p\mid n}\left(1+\frac{(-1)^{k-1}(k-1)}{\delta_k(p)}\right).
\end{aligned}
\end{equation*}
\end{definition}
Our first main result is the following.

\begin{theorem}
\label{thm:1}
For $k\geq 1$, we have the estimate
$$
\left |A_k(n)-C_k\,f_k(n)\,\frac{n^{k}}{k!}\right|\le \frac{2+e}{\sqrt{2\pi k}}(e^2\log n)^{k}n^{k-1}.
$$
\end{theorem}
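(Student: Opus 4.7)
The strategy is to apply Möbius inversion to the $k$ coprimality conditions $\gcd(x,y_i)=1$ and then analyse the resulting restricted composition count on the hyperplane $x+y_1+\cdots+y_k=n$.

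First, using $[\gcd(x,y_i)=1]=\sum_{d_i\mid\gcd(x,y_i)}\mu(d_i)$ for each $i$ and multiplying through, one obtains
\[
A_k(n)=\sum_{d_1,\ldots,d_k\geq 1}\mu(d_1)\cdots\mu(d_k)\,G(n;D,d_1,\ldots,d_k),
\]
where $D=\operatorname{lcm}(d_1,\ldots,d_k)$ and $G(n;a_0,\ldots,a_k)$ is the number of tuples $(z_0,\ldots,z_k)\in\mathbb{N}^{k+1}$ with $a_0z_0+\cdots+a_kz_k=n$ (only squarefree $d_i$'s contribute). I would then expand $G$ via its generating function $\prod_i 1/(1-t^{a_i})$: the order-$(k{+}1)$ pole at $t=1$ gives a leading volume term $n^k/(k!\,a_0\cdots a_k)$, while the subleading contribution, of size $n^{k-1}/\prod a_i$, depends on $n$ through residues modulo the $a_i$'s.

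Substituting only the leading volume term back into the Möbius sum produces a multiplicative sum that factors as
\[
\frac{n^k}{k!}\sum_{\substack{d_1,\ldots,d_k\geq 1\\\text{squarefree}}}\frac{\mu(d_1)\cdots\mu(d_k)}{D\,d_1\cdots d_k}=\frac{n^k}{k!}\prod_p L_p,
\]
where $L_p$ is obtained by summing $(-1)^{|S|}/p^{\mathbf{1}[S\neq\varnothing]+|S|}$ over subsets $S\subseteq\{1,\ldots,k\}$ encoding which $d_i$'s are divisible by $p$. A direct computation gives $L_p=1-(p^k-(p-1)^k)/p^{k+1}$, which misses the target $1-\psi_k(p)/p^k$ by exactly $(-1)^k/p^{k+1}$ at each prime. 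The missing correction is supplied by the subleading part of $G$, which depends on $n\bmod p$; this contributes an additional $(-1)^k/p^k$ to the local factor at every prime $p\mid n$, yielding the extra factor $1+(-1)^k/(p^k-\psi_k(p))$ and hence the $f_k(n)$ factor.

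The remaining error---from deeper subleading terms of $G$ and from the Möbius tail where some $d_i$ is large---is bounded using $|\mu(d)|\leq 1$, the crude volume bound $G\leq n^k/(k!\,D\prod d_i)$, and the Mertens-type estimate $\sum_{d\leq n}1/d\leq\log n + O(1)$, giving an overall error of order $n^{k-1}(\log n)^k/k!$. Stirling's approximation $1/k!\leq e^k/(k^k\sqrt{2\pi k})$ then produces the explicit constant $(2+e)/\sqrt{2\pi k}$ in the bound. The main obstacle is clearly the prime-by-prime matching: the volume approximation systematically misses the true main term at every prime by $(-1)^k/p^{k+1}$, and recovering this correction together with the $f_k(n)$ boost at primes $p\mid n$ from the subleading asymptotics of $G$ requires careful bookkeeping of how residues modulo small primes interact with the linear constraint $\sum a_i z_i=n$.
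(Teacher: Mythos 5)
Your opening reduction is valid and genuinely different in form from the paper's: you expand each condition $\gcd(x,y_i)=1$ separately, getting a $k$-fold M\"obius sum over $(d_1,\ldots,d_k)$ and the weighted denumerant $G(n;D,d_1,\ldots,d_k)$, whereas the paper uses a single squarefree modulus $d$ and the sets $\mathcal{K}_{k,d}(n)$. Your computation of the local factor $L_p=1-(p^k-(p-1)^k)/p^{k+1}$ is also correct. But the argument breaks at the key analytic step: the claim that $G(n;a_0,\ldots,a_k)$ equals its volume term $n^k/(k!\,a_0\cdots a_k)$ up to an error of size $n^{k-1}/\prod a_i$ is false. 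The generating function $\prod_i 1/(1-t^{a_i})$ has poles at roots of unity other than $1$ whose order equals the number of $a_i$ divisible by the corresponding integer, and in your setup $a_0=D=\operatorname{lcm}(d_1,\ldots,d_k)$ is divisible by every prime dividing any $d_i$, so these poles routinely have order up to $k+1$ and contribute terms of size $\Theta(n^k)$, not $O(n^{k-1})$. Already for $k=1$: $G(n;d,d)=(n/d-1)\cdot[d\mid n]$, which is $0$ when $d\nmid n$ and roughly $n/d$ (not $n/d^2$) when $d\mid n$; in particular your crude upper bound $G\le n^k/(k!\,D\prod d_i)$ also fails there. This is not a repairable bookkeeping issue: you correctly observe that $\prod_p L_p$ differs from $C_kf_k(n)$ by local factors $1+\Theta(1/p^k)$, i.e.\ the two candidate main terms differ by $\Theta_k(n^k)$, yet you propose to recover that difference from contributions you have just bounded by $O(n^{k-1}(\log n)^k)$. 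Both statements cannot hold, and the resolution is that the ``subleading part of $G$'' is in fact of main-term order.

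The paper's proof is organized precisely to avoid this. For each squarefree $d$ it splits a composition into its residue vector $(y_1^*,\ldots,y_k^*)$ modulo $d$ and the quotient vector $(X,Y_1,\ldots,Y_k)$; the number of admissible residue vectors is computed \emph{exactly} by the Chinese remainder theorem and Lemma~\ref{modp}(a) as $\Theta_{k,n}(d)=\prod_{p\mid d,\,p\mid n}\phi_k(p)\prod_{p\mid d,\,p\nmid n}\psi_k(p)$, and only the count of quotient vectors is approximated, by $(n/d)^k/k!+O((n/d)^{k-1})$ via Lemma~\ref{power}. In this way the $n$-dependent local factors ($\phi_k(p)$ versus $\psi_k(p)$, which produce $f_k(n)$) sit inside the exact main term from the outset, and the genuine error really is $O(n^{k-1})$ per modulus. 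If you want to keep your $k$-fold decomposition, you would need to replace the volume approximation of $G$ by an exact count of the solutions of $Dz_0+\sum d_iz_i=n$ modulo the relevant primes (equivalently, redo the CRT analysis prime by prime), at which point you essentially reconstruct the paper's $\Theta_{k,n}(d)$ after regrouping the subsets $S$ by inclusion--exclusion.
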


In Theorem~\ref{thm:1} and in what follows, we could use the Landau symbol $O$ with its usual meaning. However, we usually shall avoid the symbol $O$ because
we want our estimates to be completely explicit. Throughout the proofs we shall use $\theta$ (with or without subscripts) for a real number with $|\theta|\le 1$.

In view of Theorem~\ref{thm:1} we have that the leading term of $A_k(n)$ is $n^k/k!$ multiplied by $C_k$ (which depends only on $k$) and by $f_k(n)$ (which depends upon the prime factorization of $n$).

When $k=1,$ since $\psi_1(x)=0$, we have that $C_1=1$ and
\begin{equation*}
\label{varphiC}
C_1f_1(n)n=\prod_{p\mid n}\left(1-\frac{1}{p}\right)n=\varphi(n).
\end{equation*}
So, the leading term in Theorem~\ref{thm:1} actually equals $A_1(n)$.

Our next result collects some information on $C_k$ and on $f_k(n)$ which, together with Theorem~\ref{thm:1}, unravels the asymptotic behaviour of $A_k(n)$.

\begin{theorem}
\label{thm:2}
For every $k\geq 2$, the series $C_k$ converges and $0<C_k<1.$ Furthermore $2/3<f_k(n)<2$.
\end{theorem}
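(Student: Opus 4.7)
My plan is to handle the claims about $C_k$ and $f_k(n)$ in turn.

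For $C_k$, I would first prove $0 < \psi_k(p) \leq p^{k-1}$ for every prime $p$ and every $k \geq 2$. Positivity of $p\,\psi_k(p) = p^k - (p-1)^k + (-1)^k$ follows from the mean value theorem bound $p^k - (p-1)^k \geq k(p-1)^{k-1} \geq k \geq 2$; the upper bound reduces to the obvious $(p-1)^k \geq (-1)^k$. Consequently every factor $1 - \psi_k(p)/p^k$ of $C_k$ lies in $[1/2, 1)$. The mean value theorem also gives $p^k - (p-1)^k \leq k p^{k-1}$, so $\psi_k(p)/p^k \leq k/p^2 + 1/p^{k+1}$ is summable over primes. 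Hence $C_k$ converges to a positive value, and $C_k < 1$ because each factor is strictly less than $1$.

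The central estimate for $f_k(n)$ is
\[
p^k - \psi_k(p) \geq p^{k-1}(p-1),
\]
which, after multiplying by $p$, reduces once more to $(p-1)^k \geq (-1)^k$. In particular, every factor of $f_k(n)$ is positive. When $k$ is odd each factor lies in $(0,1)$, so $f_k(n) < 1 < 2$; when $k$ is even each factor is at least $1$, so $f_k(n) \geq 1 > 2/3$. Thus two of the four inequalities are immediate.

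For the remaining two inequalities I combine the central estimate with the elementary $p^{k-1}(p-1) \geq p(p-1)$ (valid for $k \geq 2$). For the upper bound with $k$ even,
\[
f_k(n) \leq \prod_p \Bigl(1 + \frac{1}{p(p-1)}\Bigr) = \prod_p \frac{1 + p^{-3}}{1 - p^{-2}} = \frac{\zeta(2)\zeta(3)}{\zeta(6)} = \frac{315\,\zeta(3)}{2\pi^4},
\]
which is less than $2$ because the elementary bounds $\zeta(3) < 1 + \frac{1}{8} + \frac{1}{27} + \frac{1}{64} + \int_4^\infty x^{-3}\,dx < 1.21$ and $\pi^4 > 97$ together give $315\,\zeta(3) < 4\pi^4$. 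For the lower bound with $k$ odd, the elementary inequality $\prod(1 - a_p) \geq 1 - \sum a_p$ reduces the claim to $\sum_p 1/(p^2(p-1)) < 1/3$, which I would verify by summing the first four prime terms exactly (contribution $\approx 0.319$) and estimating the tail $\sum_{p \geq 11} 1/(p^2(p-1))$ via the integral test. The main obstacle is the numerical tightness of this last step: the sum $\sum_p 1/(p^2(p-1))$ is approximately $0.325$, only slightly below $1/3$, so the tail estimation must be done with some care and cannot rely on a single crude zeta-type bound.
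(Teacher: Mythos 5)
Your argument is correct. The treatment of $C_k$ coincides with the paper's: both rest on the two-sided bound $1\le\psi_k(p)\le p^{k-1}$ (the paper cites its lemma on linear congruences modulo $p$, you derive it directly from the mean value theorem) together with $\psi_k(p)/p^k=O(1/p^2)$ to get convergence of the product to a positive limit below $1$. Where you genuinely diverge is in the explicit numerical bounds on $f_k(n)$. The paper also starts from $p^k-\psi_k(p)\ge p^{k-1}(p-1)$, weakened to $p^2-p$ for $k$ even and to $p^3-p^2$ for $k$ odd, but then estimates $\log f_k(n)$ by brute-force summation of $\log\left(1+1/(p^2-p)\right)$, respectively $\log\left(1+1/(p^3-p^2-1)\right)$, over all primes up to about $1000$ plus a tail. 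You instead evaluate the even-$k$ product in closed form, $\prod_p\left(1+1/(p(p-1))\right)=\zeta(2)\zeta(3)/\zeta(6)=315\,\zeta(3)/(2\pi^4)\approx 1.94<2$, which eliminates the long prime tabulation entirely; and for odd $k$ you replace the logarithmic estimate by the Weierstrass inequality $\prod(1-a_p)\ge 1-\sum_p a_p$ with $a_p\le 1/(p^2(p-1))$. The price of the latter is tightness: $\sum_p 1/(p^2(p-1))\approx 0.3246$ against the required $1/3$, a margin of less than $0.01$; your proposed verification (four exact terms giving about $0.31896$, plus a tail bounded by $\sum_{m\ge 10}m^{-3}<0.006$) does go through, but with far less room than the paper's $f_k(n)\ge e^{-0.3615}>2/3$. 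In short, your route buys a computation-free upper bound and a much shorter numerical check for the lower bound, at the cost of a near-critical constant, whereas the paper's route is numerically robust but requires summing over roughly the first $168$ primes.
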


For $B_k(n)$, we prove the following.

\begin{theorem}
\label{thm:allcoprime}
For $k\geq 2$ and $n\geq e^{k2^{k+2}}$, we have the estimate
$$
\left|B_k(n)-D_k g_k(n) \frac{n^{k-1}}{(k-1)!}\right|\le \frac{707n^{k-1}}{\log n}.
$$
\end{theorem}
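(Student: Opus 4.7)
The plan is to apply M\"obius inversion to the $\binom{k}{2}$ coprimality conditions defining $\mathcal{B}_k(n)$ and then recognize the main term as an Euler product equal to $D_k\,g_k(n)$. Starting from $\mathbf{1}[\gcd(x_i,x_j)=1]=\sum_{d_{ij}\mid\gcd(x_i,x_j)}\mu(d_{ij})$ applied to each pair $\{i,j\}$ and interchanging summations, we obtain
\begin{equation*}
B_k(n)=\sum_{\mathbf{d}=(d_{ij})_{i<j}}\Bigl(\prod_{i<j}\mu(d_{ij})\Bigr)\,M_{\mathbf{d}}(n),
\end{equation*}
where $D_i:=\mathrm{lcm}_{j\neq i}(d_{ij})$ and $M_{\mathbf{d}}(n)$ counts the $k$-compositions $(x_1,\ldots,x_k)$ of $n$ with $D_i\mid x_i$ for every $i$.

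The next step is an estimate for the denumerant $M_{\mathbf{d}}(n)$. Writing $x_i=D_iy_i$ with $y_i\ge 1$ turns this into the number of positive-integer solutions of $\sum_iD_iy_i=n$. Setting $g:=\gcd(D_1,\ldots,D_k)$, we have $M_{\mathbf{d}}(n)=0$ when $g\nmid n$; otherwise, rescaling by $g$ and applying a simplex lattice-point count (or equivalently a generating-function calculation near $z=1$) yields
\begin{equation*}
M_{\mathbf{d}}(n)=\frac{g\,n^{k-1}}{(k-1)!\,\prod_iD_i}+E_{\mathbf{d}}(n),
\end{equation*}
with $|E_{\mathbf{d}}(n)|$ controlled uniformly in $\mathbf{d}$ by a quantity of size at most $n^{k-2}$ times a polynomial in $k$ and $1/\prod_iD_i$. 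This explicit uniform bound is the essential input for the error analysis.

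Substituting the main term produces
\begin{equation*}
\frac{n^{k-1}}{(k-1)!}\sum_{\mathbf{d}}\Bigl(\prod_{i<j}\mu(d_{ij})\Bigr)\frac{\gcd(D_i)\,\mathbf{1}[\gcd(D_i)\mid n]}{\prod_iD_i}.
\end{equation*}
Since $\mu$ is supported on squarefree integers and $\mathrm{lcm}$, $\gcd$ and divisibility all factor prime-by-prime, this sum converges and factors as an Euler product $\prod_p\Lambda_p$, where the local factor is a finite sum over $(e_{ij}):=(v_p(d_{ij}))\in\{0,1\}^{\binom{k}{2}}$. Interpreting this sum as a signed count of subgraphs $G\subseteq K_k$ weighted by $p^{-|V(G)|}$ (with $|V(G)|$ the number of non-isolated vertices) and restricted by $\mathbf{1}[\min_i v_p(D_i)\le v_p(n)]$, and using the elementary identity $\sum_{G\subseteq K_s,\,V(G)=[s]}(-1)^{|E(G)|}=(-1)^{s+1}(s-1)$ for $s\ge 1$, a direct computation gives
\begin{equation*}
\Lambda_p=\begin{cases}\delta_k(p)/p^{k-1}&\text{if }p\nmid n,\\(\delta_k(p)+(-1)^{k-1}(k-1))/p^{k-1}&\text{if }p\mid n.\end{cases}
\end{equation*}
Hence $\prod_p\Lambda_p=D_k\,g_k(n)$, identifying the leading term $D_k\,g_k(n)\,n^{k-1}/(k-1)!$.

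The hard part is the error analysis. One truncates the M\"obius sum at some cutoff $X$ on the $d_{ij}$, bounds the tail using $M_{\mathbf{d}}(n)\le\binom{n-1}{k-1}$ together with the rarity of tuples in which some $d_{ij}$ is large, and sums $|E_{\mathbf{d}}(n)|\,\prod|\mu(d_{ij})|$ over the remaining range. The subtlety is that the M\"obius sum is $\binom{k}{2}$-dimensional, so crude estimates pick up multiplicative factors that grow exponentially in $k$ (from the number of divisor configurations at each prime, from $(\log n)^{\binom{k}{2}}$-type losses in the truncation, and from the signed graph count in the local computation). The hypothesis $n\ge e^{k2^{k+2}}$ is precisely what is needed to absorb all these $k$-dependent factors into a single factor $1/\log n$, after which the final estimate collapses to $707\,n^{k-1}/\log n$. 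Choosing the truncation parameter and juggling these constants simultaneously, while keeping the explicit value $707$ in the final bound, is the main technical obstacle.
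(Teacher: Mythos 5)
Your route is genuinely different from the paper's. The paper does not invert over all $\binom{k}{2}$ pairwise conditions: it sieves only the primes $p\le \log n/2$ through the single squarefree modulus $d(n)=\prod_{p\le \log n/2}p$ (so that $d(n)<n^{0.5006}$), counts $B_{k,d(n)}(n)$ by reducing modulo each such $p$ and using the counts $\eta_k(p)$, $\delta_k(p)$ of solutions of a linear congruence with at most one zero coordinate (Lemma~\ref{modp}~d)), and removes the compositions spoiled by a large prime via a direct union bound costing $24n^{k-1}/\log n$. Your identification of the main term is nevertheless correct: your local factor is $\eta_k(p)/p^{k-1}$ for $p\mid n$ and $\delta_k(p)/p^{k-1}$ for $p\nmid n$, and the product is indeed $D_kg_k(n)$.

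The gap is the error analysis, which for an explicit bound such as $707\,n^{k-1}/\log n$ is the whole theorem, and the two quantitative claims you make about it do not hold up. First, the asserted uniform bound $|E_{\mathbf d}(n)|\le n^{k-2}\cdot\mathrm{poly}(k)\cdot(\prod_iD_i)^{-1}$ is false: take $k=3$ and $d_{12}=d_{13}=d_{23}=D$ squarefree with $D\mid n$ and $D\approx\sqrt n$, so $D_1=D_2=D_3=D$; then $M_{\mathbf d}(n)=\binom{n/D-1}{2}=n^2/(2D^2)-\tfrac{3n}{2D}+1$, so the true error is of order $n/D\approx\sqrt n$, while your bound would give about $n/D^3\approx n^{-1/2}$. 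The correct second-order term of the denumerant carries a factor on the order of $\sum_iD_i$ in the numerator, and once that is inserted the sum of the errors over $\mathbf d$ acquires local Euler factors of size $1+O(2^{\binom{k}{2}}/p)$ (one term for each edge subset of $K_k$), hence losses of order $(\log n)^{2^{\binom{k}{2}}}$ before truncation and constants of order $\exp\bigl(2^{\binom{k}{2}}\bigr)$ after it. Second, the hypothesis $n\ge e^{k2^{k+2}}$ gives only $\log n\ge k2^{k+2}$, which cannot absorb factors doubly exponential in $k$ of the kind your $\binom{k}{2}$-dimensional inversion produces; it is calibrated to the paper's argument, where the only $k$-dependent losses are the single terms $k2^k/p^2$ from Lemma~\ref{modp}~g) and the factor $2^{k+4}k^2/(k-1)!\le 682.7$. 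So the claim that this hypothesis is ``precisely what is needed'' is unsupported in your setup, and the truncation, the tail bound, and the emergence of the constant $707$ are all deferred rather than proved: as written, the proposal establishes the shape of the asymptotic but not the stated inequality.
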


Exactly as in Theorem~\ref{thm:1}, we see that the leading term of $B_k(n)$ is $n^{k-1}/(k-1)!$ multiplied by $D_k$ (which depends only on $k$) and by $g_k(n)$ (which depends upon the prime factorization of $n$).

When $k=2,$ since $\delta_2(x)=x$, we have that $D_2=1$ and
\begin{equation*}\label{varphiD}D_2g_2(n)n=\prod_{p\mid n}\left(1-\frac{1}{p}\right)n=\varphi(n).
\end{equation*}
So the leading term in Theorem~\ref{thm:allcoprime} actually equals $B_2(n)$.

Theorem~\ref{thm:3} collects some information on $D_k$ and $g_k(n)$, which helps to describe the order of magnitude of $B_k(n)$.

\begin{theorem}
\label{thm:3}
For every $k\geq 3$, the series $D_k$ converges and $0<D_k< 1.$ Furthermore $1/2k<g_k(n)<2k$.
\end{theorem}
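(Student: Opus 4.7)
The proof splits naturally into the claim about $D_k$ and the claim about $g_k(n)$.

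For $D_k$, I would begin by showing each factor $\delta_k(p)/p^{k-1}$ lies in $(0,1)$. Starting from the identity $\delta_k(p)\cdot p=(p-1)^{k-1}(p+k-1)+(-1)^k(k-1)$, positivity is clear since $(p-1)^{k-1}(p+k-1)$ comfortably dominates $|(-1)^k(k-1)|$. For the strict upper bound $\delta_k(p)<p^{k-1}$, I would use the binomial expansion $p^k=((p-1)+1)^k$ to rewrite $p^{k-1}-\delta_k(p)=\tfrac{1}{p}\bigl(\sum_{j=0}^{k-2}\binom{k}{j}(p-1)^j-(-1)^k(k-1)\bigr)$, and observe that for $k\ge 3$ the $j=k-2$ term alone contributes $\binom{k}{2}(p-1)^{k-2}\ge k(k-1)/2\ge k-1$, which dominates $|(-1)^k(k-1)|$. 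Convergence then follows by Taylor-expanding $\delta_k(p)/p^{k-1}=(1-1/p)^{k-1}(1+(k-1)/p)+(-1)^k(k-1)/p^k=1-k(k-1)/(2p^2)+O_k(1/p^3)$, so that $\sum_p\log(\delta_k(p)/p^{k-1})$ converges absolutely.

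For $g_k(n)$, the idea is to isolate the factor at $p=2$, which behaves very differently from the rest. Direct computation gives $\delta_k(2)=(1+k+(-1)^k(k-1))/2$, equal to $1$ for odd $k$ and to $k$ for even $k$. Hence (when $2\mid n$) the $p=2$ factor of $g_k(n)$ equals $k$ for odd $k$ and $1/k$ for even $k$. For primes $p\ge 3$ and any $k\ge 2$ I would establish $\delta_k(p)\ge (p-1)^{k-1}$ by rewriting $\delta_k(p)=(p-1)^{k-1}+(k-1)((p-1)^{k-1}+(-1)^k)/p$ and noting the correction is nonnegative; this already gives $(k-1)/\delta_k(p)\le (k-1)/2^{k-1}\le 1/2$ for $k\ge 3$.

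It remains to control $R_k(n)=\prod_{p\mid n,\,p\ge 3}(1+(-1)^{k-1}(k-1)/\delta_k(p))$. The inequality $|\log(1+y)|\le 2|y|$ valid on $|y|\le 1/2$ yields $|\log R_k(n)|\le 2\sum_{p\ge 3}(k-1)/\delta_k(p)$. For $k\ge 5$, the crude estimate $(k-1)/\delta_k(p)\le (k-1)/(p-1)^{k-1}$, combined with $\sum_{p\ge 3}1/(p-1)^{k-1}\le \zeta(k-1)/2^{k-1}$ (since $p-1$ is even and $\ge 2$), gives $|\log R_k(n)|\le (k-1)\zeta(k-1)/2^{k-2}<\log 2$. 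For $k=3$ and $k=4$ I would use the explicit formulas $\delta_3(p)=p^2-3$ and $\delta_4(p)=((p-1)^3(p+3)+3)/p$, summing $(k-1)/\delta_k(p)$ over the first few primes explicitly and bounding the tail by $\sum 1/p^{k-1}$. Combining with the $p=2$ factor yields $|\log g_k(n)|\le \log k+\log 2=\log(2k)$, that is, $1/(2k)<g_k(n)<2k$.

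The main technical obstacle is the pair of cases $k=3$ and $k=4$: the uniform bound from $\delta_k(p)\ge (p-1)^{k-1}$ is not sharp enough (for $k=3$ the tail over all even integers $\ge 2$ exceeds $\log 2$), and one is forced to work with the exact polynomial form of $\delta_k(p)$ together with an explicit numerical tail estimate.
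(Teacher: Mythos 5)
Your proposal is correct, and its skeleton coincides with the paper's: both prove $0<\delta_k(p)/p^{k-1}<1$ factorwise, both prove convergence of $D_k$ by showing $\log\bigl(p^{k-1}/\delta_k(p)\bigr)=O_k(1/p^2)$, and both obtain the $g_k$ bounds by isolating the prime $2$ (where $\delta_k(2)\in\{1,k\}$ produces the factor $k$ or $1/k$) and showing the product over odd primes lies in $(1/2,2)$. The technical execution differs in two places, and your choices are arguably cleaner. For the convergence of $D_k$, the paper invokes its Lemma on linear congruences (parts e, f, g), in particular the bound $p^{k-1}/\delta_k(p)\le 1+k2^k/p^2$ valid only for $p\ge\sqrt{k2^k}$, and treats the finitely many small primes separately; your Taylor expansion $\delta_k(p)/p^{k-1}=1-k(k-1)/(2p^2)+O_k(1/p^3)$, derived directly from the identity $p\,\delta_k(p)=(p-1)^{k-1}(p+k-1)+(-1)^k(k-1)$, is self-contained and even identifies the exact second-order coefficient. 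For the odd-prime product, the paper reduces every $k$ to the cases $k=3$ and $k=4$ via the monotonicity inequalities $(k-1)/\delta_k(p)\le 2/\delta_3(p)$ (odd $k$) and $(k-1)/\delta_k(p)\le 3/\delta_4(p)$ (even $k$), and then evaluates the two resulting products numerically once; you instead dispose of all $k\ge 5$ uniformly with the estimate $(k-1)\zeta(k-1)/2^{k-2}<\log 2$ and compute $k=3,4$ explicitly. Your final remark is also accurate: the crude bound $\delta_k(p)\ge(p-1)^{k-1}$ genuinely fails to close the cases $k=3,4$ (the corresponding zeta bounds are $\zeta(2)$ and $3\zeta(3)/4$, both exceeding $\log 2$), which is exactly why the paper's numerical evaluation of $\prod_{p>2}(1+2/(p^2-3))$ and $\prod_{p>2}(1-3/(p^3-6p+8))$ — or your equivalent explicit computation — is unavoidable.
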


Similar problems on compositions with restricted arithmetical conditions
have been studied in~\cite{GO} and~\cite{Lis}. In particular, using the principle of inclusion-exclusion, Gould~\cite[Theorem~5]{GO} has obtained a formula for the number of $k$-compositions $(x_1,\ldots,x_k)$ of $n$ with $\gcd(x_1,x_2,\ldots,x_k)=1$.

Finally, in Table~\ref{table}, we give some approximate values for $C_k$ and $D_k$ (for $1\leq k\leq 7$), which are obtained with the help of \texttt{magma}~\cite{magma}.

\begin{table}[!h]
\begin{tabular}{|l|l|l|}\hline
$k$&$C_k$&$D_k$\\\hline
$1$&$1$&--\\
$2$&$0.32263$&$1$\\
$3$&$0.38159$&$0.12548$\\
$4$&$0.26778$&$0.19680$\\
$5$&$0.26328$&$0.01312$\\
$6$&$0.23051$&$0.02330$\\
$7$&$0.22123$&$0.00099$\\\hline
\end{tabular}\label{table}
\smallskip
\caption{Some values for $C_k$ and $D_k$}
\end{table}

\subsection{Applications to Group Theory and to Galois Theory}
In \cite{BP}, the first author together with  Praeger, investigated the {\em normal coverings} of a finite group $G$, that is, the families $H_1,\ldots,H_r$ of proper subgroups of $G$ such that each element of  $G$ has a conjugate in  $H_i$, for some $i\in \{1,\ldots,r\}$. The minimum $r$ is usually denoted by $\gamma(G)$. They find that when $G$ is the symmetric group $S_n$ or the alternating group $A_n$, the number
$\gamma(G)$ lies between $a\,\varphi(n)$ and $b\,n$ for certain positive constants
$a$ and $b$. 
More recently, Bubboloni, Spiga and  Praeger~\cite {BPS} have developed some new research on this topic starting with the idea that  primitive subgroups of the symmetric group are ``few and small''  (see \cite{B}, \cite{LS},  \cite{LMS} and \cite{MA}) and therefore cannot  play a significant role in normal coverings.  With an application of Theorem~\ref{thm:1},  they show that, for $G=S_n$ or $ A_n $, the number $\gamma(G)$ is asymptotically linear in $n$ (improving  every previous result in this area).

The normal coverings of the symmetric and of the alternating group are relevant for some problems in  Galois theory~\cite{BP}.
Let $f(x)\in \mathbb{Z}[x]$ be a polynomial which has a root $\mod\,p,$ for all primes $p$, and consider its Galois group over the rationals $G=Gal_{\mathbb{Q}}(f)$.  Let $f_1(x),\dots, f_k(x)\in \mathbb{Z}[x]$ be the
distinct irreducible factors of $f(x)$ over $\mathbb{Q}$, and suppose that no $f_i$ is linear. By \cite[Theorem 2]{BB},
we have $k\geq \gamma (G).$
In other words, for a polynomial $f(x)$ which has a root $\mod\,p,$ for all primes $p$, but no root in $\mathbb{Q}$, the number of subgroups involved in a minimal normal covering of its Galois group is a lower bound for the number of distinct irreducible factors of $f(x)$ over $\mathbb{Q}$. In this context the pertinence of the results in \cite{BP}, in this paper and in \cite{BPS} relies on the fact that the most common Galois groups are the symmetric and the alternating groups~\cite{WA}.

Finally, we point out that Theorems~\ref{thm:1} and~\ref{thm:2} are also used in~\cite{BS}, to obtain some bounds on the diameter of the generating graph of $S^n$, for $n\geq 1$ and for a finite non-abelian simple group $S$.

\subsection{Structure of the paper} Theorems~\ref{thm:2} and~\ref{thm:3} are proved in Section~\ref{sec:2}, Theorem~\ref{thm:1} is proved in Section~\ref{sec:proof} and Theorem~\ref{thm:allcoprime} is proved in Section~\ref{sec:allcoprime}.

\section{En route to the proof of Theorem \ref{thm:1}}
We denote with
\begin{equation*}
\begin{aligned}
\mathcal{K}_k(n)=&\{(x_1,x_2,\dots,x_{k+1})\in\mathbb{N}^{k+1} :\  n=\sum_{i=1}^{k+1}x_i\} \textrm{ and}\\
\mathcal{U}_k(n)=&\{(x_1,x_2,\dots,x_{k+1})\in(\mathbb{N}\cup\{0\})^{k+1} :\ n=\sum_{i=1}^{k+1}x_i\},
\end{aligned}
\end{equation*}
respectively, the set of  $(k+1)$-compositions and  the set of {\em generalized $(k+1)$-compositions} of $n$. It is well known that
\begin{equation}
\begin{aligned}\label{co}
K_k(n)=&\#\mathcal{K}_k(n)=\binom{n-1}{k}=\frac{n^{k}}{k!}+\theta_K kn^{k-1}, \textrm{ and}\\
U_k(n)=&\#\mathcal{U}_k(n)
=\binom{n+k}{k}=\frac{n^{k}}{k!}+ \theta_U kn^{k-1}
\end{aligned}
\end{equation}
(see for instance~\cite{GO}).

The following definition will turn out to be crucial in the proof of Theorem~\ref{thm:1}.

\begin{definition}
\label{def:kdn}
For a square-free positive integer $d\geq 1$, write $$\mathcal{K}_{k,d}(n)=\{(x,y_1,\dots,y_k  )\in  \mathcal{K}_k(n):\ d \ \text{divides}\ \gcd(x,\prod_{i=1}^ky_i)\}$$ and $$
K_{k,d}(n)=\# \mathcal{K}_{k,d}(n).$$
\end{definition}

\noindent Note that, if $J=\{p_1,\ldots,p_s\}$  is a set of primes and $d=p_1\cdots p_s$, then
$$
\mathcal{K}_{k,d}(n)=\bigcap_{i=1}^{s}\mathcal{K}_{k,p_i}(n).
$$
Moreover $K_{k,1}(n)=K_{k}(n).$
Clearly $\mathcal{K}_{k,d}(n)$ is empty when $d>n.$

\noindent Our idea to compute $A_k(n)$ is to use the principle of inclusion-exclusion  as
$$
\mathcal{A}_{k}(n)=\mathcal{K}_{k}(n)\setminus \bigcup_{p\in \mathbb{P}_n}\mathcal{K}_{k,p}(n),
$$
where
$$
\mathbb{P}_n=\{r \in \mathbb{N}\ :\ r \ \text{prime}, \ r\leq n \}.
$$
Namely,
\begin{eqnarray}\nonumber
A_{k}(n)&=&\#\mathcal{K}_{k}(n)-\#\left(\bigcup_{p \in \mathbb{P}_n}\mathcal{K}_{k,p}(n)\right)=K_{k}(n)+\sum_{\emptyset\neq J \subseteq \mathbb{P}_n} (-1)^{\#J}\#\bigcap_{p\in J}\mathcal{K}_{k,p}(n)\\
&=&\sum_{J\subseteq \mathbb{P}_n }(-1)^{\#J}\#\bigcap_{p\in J}\mathcal{K}_{k,p}(n)=\sum_{1\leq d\leq n} \mu(d) K_{k,d}(n),\label{eq:incexc}
\end{eqnarray}
where $\mu$ is the M\"obius function.

In light of \eqref{eq:incexc}, to prove Theorem \ref{thm:1} we need to estimate the numbers $K_{k,d}(n)$.   This will be possible thanks to some lemmas on linear equations modulo $p$ which we give in Section \ref{sec:2}. An asymptotic formula for $K_{k,d}(n)$ is then obtained in Proposition~\ref{J}. Throughout the rest of this paper, we reserve the letter $d$ to denote a square-free positive integer.

\section{Linear equations modulo $p$ and the proof of Theorems~\ref{thm:2} and~\ref{thm:3}}\label{sec:2}

We start by  introducing two auxiliary polynomials $\phi_k(x)$ and $\eta_k(x)$ which are closely related to $\psi_k(x)$ and $\delta_k(x)$ in Definition~\ref{def}. These polynomials turn out to be fundamental
for understanding the {\it local} aspects of the sets  $\mathcal{A}_{k}(n)$ and $\mathcal{B}_k(n)$.

\begin{definition}\label{phi}
For $k\geq 1$, define
\begin{equation*}
\begin{aligned}
\phi_k(x)&=\frac{x^k-(x-1)^k+(-1)^{k+1}(x-1)}{x},\\
\eta_k(x)&=\frac{(x-1)^k+k(x-1)^{k-1}+(-1)^{k-1}(k-1)(x-1)}{x}.
\end{aligned}
\end{equation*}
\end{definition}
A direct calculation shows immediately that for any $k\geq 1,$ we have:
\begin{equation}\label{etadelta}
\begin{aligned}
\phi_k(x)&=\psi_k(x)+(-1)^{k+1},\\
\eta_k(x)&=\delta_k(x)+(-1)^{k-1}(k-1).
\end{aligned}
\end{equation}
When $k=1$, we have
\begin{equation*}\label{1}
\psi_1(x)=0,\quad\delta_1(x)=1,\quad \phi_1(x)=1,\quad \eta_1(x)=1.
\end{equation*}

\begin{lemma}
\label{modp}
Let $k$ and $n$ be integers with $k\geq 1$ and $n\geq 0$. Write
\begin{equation*}
\begin{aligned}
S_k(x)&=x^{k-1}-\phi_k(x)=\frac{(x-1)^k+(-1)^k(x-1)}{x},\\
W_k(x)&=x^{k-1}-\psi_k(x)= \frac{(x-1)^k+(-1)^{k+1}}{x}.
\end{aligned}
\end{equation*}
Then:
\begin{itemize}
 \item [a)] The number of solutions of
\begin{equation}
\label{homA}y_1^*+\cdots +y_{k}^*\,\equiv n \pmod p,\end{equation}
with $y_i^*\in \{0,\ldots,p-1\}$ for every $i\in\{1,\ldots,k\}$ and with $y_j^*=0$ for some $j\in \{1,\ldots,k\}$, is  $\phi_k(p)$ if $p\mid n$ and $\psi_k(p)$ if $p\nmid n$.

\item[b)]For $k\geq 2$, the inequality
$$
\max\{\phi_k(x),\psi_k(x)\}\le kx^{k-2}
$$
holds for all $x\ge 2$.

\item[c)]The number of solutions of~\eqref{homA}, with $y_j^*\in \{1,\ldots,p-1\}$ for every $i\in\{1,\ldots,k\}$, is $S_k(p)$ if $p\mid n$, and $W_k(p)$ if $p\nmid n$.

\item[d)]The number of solutions of~\eqref{homA}, with $y_j^*\in \{0,\ldots,p-1\}$ for every $i\in\{1,\ldots,k\}$ and with $y_j^*=0$ for at most one $j\in \{1,\ldots,k\}$, is $\eta_k(p)$ if $p\mid n$, and  $\delta_k(p)$ if $p\nmid n$.

\item[e)] For  $k\geq 1,$ we have
$$
\max\{\psi_k(p),\,\delta_k(p),\,\phi_k(p),\, \eta_k(p)\}\leq p^{k-1}.
$$
Moreover, if $k\geq 3$, then
$$
(p-1)^{k-1}\leq \delta_k(p)<p^{k-1}.
$$

\item[f)] For $k\geq 2,$ we have
$$
\min\{\psi_k(p),\,\delta_k(p),\,\phi_k(p),\, \eta_k(p)\}\geq 1.
$$

\item[g)]For $k\geq 3$, the inequality
$$
\max\left\{\frac{p^{k-1}}{\eta_k(p)},\frac{p^{k-1}}{\delta_k(p)}\right\}\leq 1+\frac{k2^k}{p^2}
$$
holds for all $p\geq \sqrt{k2^k}$.
\end{itemize}
\end{lemma}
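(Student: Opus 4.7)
My overall plan is to prove (a), (c), (d) first via a single counting technique; then to use these counts to deduce (b), (e), (f) by relatively elementary manipulations; and finally to attack the more delicate bound (g) by careful binomial expansion.

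For (a), (c), (d), the uniform tool is the standard orthogonality formula
\[
\#\{(y_1^*,\dots,y_k^*) \in T^k : y_1^* + \cdots + y_k^* \equiv n \pmod{p}\} = \frac{1}{p}\sum_{\omega^p=1}\omega^{-n}\Bigl(\sum_{y\in T}\omega^y\Bigr)^k,
\]
where $T\subseteq\{0,\dots,p-1\}$. Taking $T=\{1,\dots,p-1\}$ one has $\sum_{y\in T}\omega^y=p-1$ for $\omega=1$ and $-1$ otherwise, which immediately yields $S_k(p)$ when $p\mid n$ and $W_k(p)$ when $p\nmid n$, proving (c). Taking $T=\{0,\dots,p-1\}$ gives the total count $p^{k-1}$, and subtracting the all-nonzero count isolates the ``at least one zero'' case of (a), whose simplification matches $\phi_k(p)$ and $\psi_k(p)$. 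For (d), ``at most one zero'' splits as ``no zeros'' plus ``exactly one zero'', the latter contributing $k$ times the $(k-1)$-variable version of (c); adding $S_k(p)+kS_{k-1}(p)$ one obtains $\eta_k(p)$ and adding $W_k(p)+kW_{k-1}(p)$ one obtains $\delta_k(p)$ after a short algebraic check.

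Parts (e), (f), (b) are essentially consequences. The upper bound in (e) is immediate from the counting interpretations above, since each of $\psi_k(p),\phi_k(p),\delta_k(p),\eta_k(p)$ counts a subset of the $p^{k-1}$ solutions to the congruence. For the lower bound on $\delta_k(p)$ I would factor
\[
\delta_k(p)=(p-1)^{k-1}+\frac{(k-1)\bigl((p-1)^{k-1}+(-1)^k\bigr)}{p},
\]
and observe that the second summand is nonnegative for $k\geq 3$, $p\geq 2$. Part (f) follows by exhibiting at least one explicit solution of the relevant restricted congruence (the all-zero vector for $\phi_k$, a single-nonzero vector for $\psi_k$, and the $(p-1)^{k-1}\geq 1$ bound for $\delta_k$ and $\eta_k$ when $k\geq 3$, with a small direct verification at $k=2$ using $\delta_2(x)=x$ and an analogous factoring for $\eta_k$). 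For (b), the identity $x^k-(x-1)^k=\sum_{j=0}^{k-1}(x-1)^jx^{k-1-j}\leq kx^{k-1}$, combined with a parity case-analysis on the $(-1)^k/x$ and $(-1)^{k+1}(x-1)/x$ correction terms, yields $\psi_k(x),\phi_k(x)\leq kx^{k-2}$.

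Part (g) is the main calculational hurdle. From $p\,\delta_k(p)=(p-1)^k+k(p-1)^{k-1}+(-1)^k(k-1)$ and the binomial expansion of $p^k=((p-1)+1)^k$ one obtains the identity
\[
p^{k-1}-\delta_k(p)=\frac{1}{p}\sum_{j=2}^{k}\binom{k}{j}(p-1)^{k-j}+\frac{(-1)^{k+1}(k-1)}{p}.
\]
The sum is bounded above by $2^k(p-1)^{k-2}$, since $(p-1)^{k-j}\leq(p-1)^{k-2}$ for $j\geq 2$ and $\sum_{j=2}^k\binom{k}{j}\leq 2^k$. Combined with the lower bound $\delta_k(p)\geq(p-1)^{k-1}$ from (e), this gives
\[
\frac{p^{k-1}}{\delta_k(p)}-1\leq \frac{2^k}{p(p-1)}+\frac{k-1}{p(p-1)^{k-1}}.
\]
The hypothesis $p\geq\sqrt{k2^k}$ with $k\geq 3$ (so in particular $p\geq 5$) lets me absorb both terms into $k2^k/p^2$: the first via $2^{k+1}\leq k2^k$ (valid for $k\geq 2$) together with $p(p-1)\geq p^2/2$, the second being easily dominated once $(p-1)^{k-1}\geq 16$. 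The $\eta_k$ estimate follows from~\eqref{etadelta}, which gives $\eta_k(p)=\delta_k(p)+(-1)^{k-1}(k-1)$, so essentially the same argument applies. The main obstacle throughout is the bookkeeping of constants, which are tight enough (particularly near $k=3$) that overly sloppy geometric-series estimates could fail to match the desired form.
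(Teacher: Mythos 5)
Your proposal is correct in substance but reaches the key counts by a genuinely different route, so a comparison is in order. For a), c), d) the paper proves a) by induction on $k$ (splitting on the value of $y_1^*$) and then obtains c) by complementation inside the full solution set of size $p^{k-1}$; you go the other way, computing the all-nonzero count c) directly with the additive-character (roots-of-unity) filter and recovering a) by complementation. Your orthogonality computation checks out: $\sum_{y=1}^{p-1}\omega^y=-1$ for $\omega\neq 1$ gives exactly $S_k(p)$ when $p\mid n$ and $W_k(p)$ when $p\nmid n$, and d) is then handled identically in both arguments as ``no zeros'' plus $k$ times the $(k-1)$-variable all-nonzero count. For the lower bound $\delta_k(p)\geq (p-1)^{k-1}$ the paper argues combinatorially (choose $y_2^*,\dots,y_k^*$ nonzero freely and solve for $y_1^*$), whereas your algebraic factoring $\delta_k(p)=(p-1)^{k-1}+(k-1)\bigl((p-1)^{k-1}+(-1)^k\bigr)/p$ is a valid alternative. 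For g) the paper writes $\delta_k(p)/p^{k-1}=1-\delta_k'(p)/p^2$ with $0\leq \delta_k'(p)\leq k2^{k-1}$ and invokes $1/(1-y)\leq 1+2y$ for $0\leq y\leq 1/2$ (this is precisely where $p\geq\sqrt{k2^k}$ enters); you instead bound the binomial tail by $2^k(p-1)^{k-2}$ and divide by the lower bound $(p-1)^{k-1}$, and your constants do close up, since $2^{k+1}+(k-2)2^k=k2^k$ — your version in fact needs little more than $p\geq 3$. Two loose ends to tie up. First, you never address the strict inequality $\delta_k(p)<p^{k-1}$ in e), which the paper uses later (it makes the terms of the series defining $D_k$ positive); it follows at once from the counting interpretation (for $k\geq 3$ the congruence has solutions with two zero coordinates, which $\delta_k(p)$ misses) or from your own identity for $p^{k-1}-\delta_k(p)$, whose $j=2$ term $\binom{k}{2}(p-1)^{k-2}$ already exceeds $k-1$. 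Second, your g) argument for $\eta_k$ tacitly needs a lower bound of the shape $\eta_k(p)\geq(p-1)^{k-1}$; the analogous factoring $\eta_k(p)=(p-1)^{k-1}+(k-1)(p-1)\bigl((p-1)^{k-2}+(-1)^{k-1}\bigr)/p$ does deliver it for $k\geq 3$, but you should say so explicitly.
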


\begin{proof} a) We prove a) by induction on $k$. If $k=1$, then the result is obvious because $\phi_1(p)=1$ and $\psi_1(p)=0$. Assume that $k\geq 2$. Suppose that $p\mid n$. Now ~\eqref{homA} has $p^{k-2}$ solutions with $y_1^*=0$. Also, for any $y_1^*\neq 0$, ~\eqref{homA} is equivalent to $y_2^*+\cdots +y_k^*\equiv (-y_1^*)\pmod p$ and so, by induction, there exist $\psi_{k-1}(p)$ solutions of ~\eqref{homA} having at least one coordinate being zero and with a fixed $y_1^*\neq 0$.  Summing up, the number of solutions of ~\eqref{homA} with at least one coordinate being zero is
\begin{eqnarray*}
p^{k-2}+(p-1)\psi_{k-1}(p)&=&p^{k-1}+(p-1)\frac{p^{k-1}-(p-1)^{k-1}+(-1)^{k-1}}{p}\\
&=&\frac{p^k-(p-1)^k+(-1)^{k+1}(p-1)}{p}=\phi_k(p).
\end{eqnarray*}
Suppose that $p\nmid n$. The number of solutions of ~\eqref{homA} with $y_1^*=0$ is $p^{k-2}$. If $y_1^*\in\{0,\ldots,p-1\}$ and $y_1^*\equiv n\pmod p$, then ~\eqref{homA} is equivalent to $y_2^*+\cdots+y_k^*\equiv 0\pmod p$, which, by induction, has $\phi_{k-1}(p)$ solutions with at least one coordinate being zero. Finally, for any $y_1^*\not\equiv 0,n\pmod p$, ~\eqref{homA} is equivalent to $y_2^*+\cdots +y_k^*\equiv (n-y_1^*)\pmod p$ which, again by induction, has $\psi_{k-1}(p)$ solutions with at least one coordinate being zero. Summing up, the number of solutions of ~\eqref{homA} with at least one coordinate being zero is
\begin{eqnarray*}
p^{k-2}+\phi_{k-1}(p)+(p-2)\psi_{k-1}(p)&=&\frac{p^k-(p-1)^k+(-1)^{k}}{p}=\psi_k(p).
\end{eqnarray*}

b) If $k=2$, we have $ \phi_2(x)=1<2$ and $\psi_2(x)=2\le 2$. Now assume that $k\geq 3$. From the factorization, $u^k-v^k=(u-v)(u^{k-1}+u^{k-2}v+\cdots +uv^{k-2}+v^{k-1})$, we see that
\begin{eqnarray*}
\phi_k(x) & = & \frac{1}{x}\left(x^{k-1}+x^{k-2}(x-1)+\cdots+(x-1)^{k-1}+(-1)^{k+1}(x-1)\right),\\
\psi_k(x) & = & \frac{1}{x} \left(x^{k-1}+x^{k-2}(x-1)+\cdots +(x-1)^{k-1}+(-1)^{k}\right).
\end{eqnarray*}
For each $i\in \{0,\ldots,k-1\}$ with $i\neq k-2$, we have $x^{i}(x-1)^{k-1-i}\leq x^{k-1}$ while $x^{k-2}(x-1)+1\leq x^{k-2}(x-1)+(x-1)\leq x^{k-1}$, for all $x\ge 2$. The result now follows.

c) For any $n\geq 0,$ the set  $\mathcal{L}_k(n)$ of solutions in $\{0,\ldots,p-1\}$ of  the linear congruence ~\eqref{homA} has size $p^{k-1}$. Moreover, the solutions of~\eqref{homA} with no coordinate being zero is the complement of the solutions described in a), with respect to $\mathcal{L}_k(n)$,
 that is,  the number of solutions of ~\eqref{homA} with no coordinate being zero is
\begin{eqnarray*}
S_k(p)= p^{k-1}-\phi_k(p) \quad & {\text{\rm if}} &  p\mid n;\\
W_k(p)= p^{k-1}-\psi_k(p)\quad & {\text{\rm if}} & p\nmid n.
\end{eqnarray*}

d) The solutions of  ~\eqref{homA} with at most one coordinate being zero is the disjoint union of the
solutions of  ~\eqref{homA} with no coordinate being zero and the  solutions of  ~\eqref{homA} with exactly one coordinate being zero. Thus, by c), we get that the number of solutions of  ~\eqref{homA} with at most one coordinate being zero is
\begin{eqnarray*}
S_k(p)+kS_{k-1}(p)=\eta_k(p)\quad  & {\text{\rm if}} &  p\mid n;\\
W_k(p)+kW_{k-1}(p)=\delta_k(p)\quad & {\text{\rm  if}} & p\nmid n.
\end{eqnarray*}

e) For any $n\geq 0,$  the linear congruence ~\eqref{homA} has exactly $p^{k-1}$ solutions in $\{0,\ldots,p-1\}$, which are obtained choosing freely the values of $k-1$ variables $y_i^*$ and computing the last one.
Moreover,  for all $p$, by a) and d), we can interpret  $\psi_k(p),\,\delta_k(p)$ as  counting the number of  particular solutions of ~\eqref{homA} with $n=1$ and $\phi_k(p),\, \eta_k(p)$ as counting the number of particular solutions of ~\eqref{homA} with $n=0.$ This gives for any $k\geq 1, $
$$
\max\{\psi_k(p),\,\delta_k(p),\,\phi_k(p),\, \eta_k(p)\}\leq p^{k-1}.
$$
Assume now $k\geq 3$ and observe that, by  d), $\delta_k(p)$ is the number of solutions of~\eqref{homA} with at most one $y_j^*=0$ and  with $n=1.$ Among these solutions we find those obtained by selecting arbitrarily $y_2^*, \dots, y_k^*\in \{1,\dots,p-1\}$ and determining the corresponding $y_1^*,$ which says $\delta_k(p)\geq (p-1)^{k-1}.$ On the other hand, since we cannot assign $0$ in two of the $k-1\geq 2$ variables $y_2^*, \dots, y_k^*$, we have $\delta_k(p)<p^{k-1}.$

f) To get
$$
\min\{\psi_k(p),\,\delta_k(p),\,\phi_k(p),\, \eta_k(p)\}\geq 1,
$$
observe that there exists at least one solution for those equations when $k\geq 2.$

g) We begin showing that, for all primes $p$ and $k\geq 3,$ the inequality
\begin{equation}
\label{max}
\max\{p^{k-1}-\delta_k(p),\, p^{k-1}-\eta_k(p)\}\leq k2^{k-1}p^{k-3}
\end{equation}
holds.
By expanding the terms in $\delta_k(p)$, we get
\begin{eqnarray*}
\delta_k(p)&=&\sum_{i=1}^k(-1)^{k-i}\binom{k}{i}p^{i-1}+k\sum_{i=1}^{k-1}(-1)^{k-1-i}\binom{k-1}{i}p^{i-1}\\
&=&p^{k-1}-\sum_{i=1}^{k-2}(-1)^{k-i}\left[k\binom{k-1}{i}-\binom{k}{i}\right]p^{i-1}.
\end{eqnarray*}
Hence, by e),
\begin{equation}\label{delta}
0\leq p^{k-1}-\delta_k(p)= \sum_{i=1}^{k-2}(-1)^{k-i}\left[k\binom{k-1}{i}-\binom{k}{i}\right]p^{i-1}
\end{equation}
and
\begin{equation}\label{eta}
0\leq p^{k-1}-\eta_k(p)= p^{k-1}-\delta_k(p)+(-1)^k(k-1).
\end{equation}

Note that since
$$
k\binom{k-1}{i}=(k-i)\binom{k}{i}\ge \binom{k}{i}\qquad {\text{\rm for~all}}\quad i\in \{1,\ldots,k-2\},
$$
we have $$k\binom{k-1}{i}-\binom{k}{i}\geq 0.$$

If $k$ is odd, we have from \eqref{delta}
$$
p^{k-1}-\delta_k(p)\leq  \sum_{i=1}^{k-2}k\binom{k-1}{i}p^{k-3}\leq k2^{k-1}p^{k-3}.
$$
By~\eqref{eta}, we have $ 
p^{k-1}-\eta_k(p)\leq p^{k-1}-\delta_k(p),$  and so 
the same inequality holds for $p^{k-1}-\eta_k(p).$

If $k$ is even, then $k-1$ is odd, so that the term corresponding to the choice $i=1$ in the sum in \eqref{delta} is negative; moreover, since $k-2\geq 2$, there is at least another term in the sum in \eqref{delta}. It follows, by \eqref{eta},  that
\begin{eqnarray*}
p^{k-1}-\eta_k(p)&=&p^{k-1}-\delta_k(p)+k-1\leq \sum_{i=2}^{k-2}k\binom{k-1}{i}p^{k-3}+k-1\\
&=&k(2^{k-1}-k-1)p^{k-3}+k-1\leq k2^{k-1}p^{k-3},
\end{eqnarray*}
because $k(k+1)p^{k-3}\geq k-1$ for any $k\geq 3.$ The same conclusion follows also for $p^{k-1}-\delta_k(p)$ since
$
p^{k-1}-\delta_k(p)\leq p^{k-1}-\eta_k(p).
$
So, we have proved~\eqref{max}.

Therefore we can write
$$
\frac{\delta_k(p)}{p^{k-1}}=1-\frac{\delta_k'(p)}{p^{2}},\qquad {\text{\rm with}}\quad 0\leq \delta_k'(p)\le k2^{k-1},
$$
as well as
$$
\frac{\eta_k(p)}{p^{k-1}}=1-\frac{\eta_k'(p)}{p^{2}},\qquad {\text{\rm with}}\quad 0\leq\eta_k'(p)\le k2^{k-1}.
$$
Thus,
\begin{eqnarray*}
\frac{p^{k-1}}{\delta_k(p)}&=&\frac{1}{1-\delta_k'(p)/p^2}\leq \frac{1}{1-k2^{k-1}/p^2}\leq 1+\frac{k2^k}{p^2}
\end{eqnarray*}
for $p^2\geq k2^k$, using the fact that
$$
\frac{1}{1-y}\leq 1+2y,\qquad {\text{\rm for~all}}\qquad 0\leq y\leq 1/2.
$$
The same argument applies to $\eta_k(p)$
and gives
$$
\max\left\{\frac{p^{k-1}}{\eta_k(p)},\frac{p^{k-1}}{\delta_k(p)}\right\}\leq 1+\frac{k2^k}{p^2}
$$
for any $k\geq 3$ and any prime $p\geq \sqrt{k2^k}$.
\end{proof}

Using Lemma~\ref{modp}, we are now ready to prove Theorems~\ref{thm:2} and~\ref{thm:3}.

\begin{proof}
[Proof of Theorem~$\ref{thm:2}$]
Let $k\geq 2$ and consider
$$
C_k=\prod_{p} \left(1-\frac{\psi_k(p)}{p^{k}}\right)=\exp\left\{-\sum_{p}-\log\left(1-\frac{\psi_k(p)}{p^{k}}\right)\right\}.
$$
By Lemma~\ref{modp} e) and f), we have
$$
1\leq \psi_k(p)\leq p^{k-1}
$$
and therefore $1/p^k\leq \psi_k(p)/p^{k}\leq 1/p$, which shows that  $0<1-\psi_k(p)/p^k<1$.  So the series
\begin{equation}
\label{series}
\sum_{p}-\log\left(1-\frac{\psi_k(p)}{p^{k}}\right)
\end{equation}
has positive terms and  $C_k$ is a real number in $[0,1).$ We will show that $C_k\neq 0$ by observing that the series \eqref{series}
converges. To do that, we expand
\begin{eqnarray*}
1-\frac{\psi_k(p)}{p^{k}}&=&1- \frac{p^k-(p-1)^k+(-1)^k}{p}\\
&=&1-\frac{k}{p^2} -\sum_{i=2}^{k-1}(-1)^{i+1}\binom{k}{i}\frac{1}{p^{i+1}}\\
&=& 1-\frac{k}{p^2} + o\left(\frac{1}{p^2}\right)\quad {\text{\rm as}}\quad p\to\infty,
\end{eqnarray*}
and therefore we obtain
$$
-\log\left(1-\frac{\psi_k(p)}{p^{k}}\right)\sim \frac{k}{p^2}\quad {\text{\rm as}}\quad p\to\infty.
$$
Since
$$\sum_{p}\frac{k}{p^2}< \sum_{n=1}^{\infty}\frac{k}{n^2}=\frac{k\pi^2}{6}
$$
converges, also the series \eqref{series} converges. 

We now turn to the inequalities involving the functions
$$
f_k(n)=\prod_{p\mid n} \left(1+\frac{(-1)^k}{p^k-\psi_k(p)}\right).
$$
By Lemma \ref{modp} e), we have
$$
p^k-\psi_k(p)\geq p^k-p^{k-1}>0
$$
and thus for any $n\in \mathbb{N}$ we get
$f_k(n)<1$ if $k$ is odd, and $f_k(n)>1$ if $k$ is even. Observe also that the function $p^k-p^{k-1}$ is increasing in $k.$
To find an upper bound when $k$ is even, note that $p^k-p^{k-1}\leq p^2-p$ and thus
$$
f_k(n)\leq \prod_{p} \left(1+\frac{1}{p^2-p}\right).
$$
It follows that
\begin{eqnarray*}
\log \left(f_k(n)\right)&\leq&\sum_{p}\log \left(1+\frac{1}{p^2-p}\right)\\
& = & \sum_{p\le 1000} \log\left(1+\frac{1}{p^2-p}\right)+
\sum_{p>1000} \log\left(1+\frac{1}{p^2-p}\right)\\
&<& 0.665+\sum_{n\ge 1001} \frac{1}{n^2-n}=0.665+0.001=0.666,
\end{eqnarray*}
which gives
$$
f_k(n)<e^{0.666}<2.
$$
Finally, we find a lower bound when $k$ is odd starting from $p^k-p^{k-1}\leq p^3-p^2,$ which immediately gives
$$
f_k(n)\geq \prod_{p} \left(1-\frac{1}{p^3-p^2}\right).
$$
It follows that
$$
\log \left(f_k(n)\right)\geq-\sum_{p}\log \left(\left(1-\frac{1}{p^3-p^2}\right)^{-1}\right),
$$
Since
\begin{eqnarray*}
\sum_{p}\log \left(\left(1-\frac{1}{p^3-p^2}\right)^{-1}\right)&=&\sum_{p}\log \left(1+\frac{1}{p^3-p^2-1}\right)\\
&  = &  \sum_{p\le 1001} \log \left(1+\frac{1}{p^3-p^2-1}\right)\\
 & + & \sum_{p>1001} \log\left(1+\frac{1}{p^3-p^2-1}\right)\\
& < & 0.361+\sum_{p> 1001} \frac{1}{p^3-p^2-1}\\
& < & 0.361+\sum_{p> 1001} \frac{1}{p(p-1)(p-2)}\\
& < & 0.361+\sum_{n\ge 1002} \left(\frac{1}{2(n-2)}-\frac{1}{n-1}+\frac{1}{2n}\right)\\
& < & 0.361+0.0005=0.3615,
\end{eqnarray*}
it follows that
$$
f_k(n)\geq e^{-0.3615}>\frac{2}{3}.
$$
\end{proof}

\begin{proof}[Proof of Theorem~\ref{thm:3}]
Let $k\geq 3$. From Lemma~\ref{modp}~e) and f), we have
$$
1\leq\delta_k(p)< p^{k-1} \quad {\text{\rm so~ that}}\quad  0<\frac{\delta_k(p)}{p^{k-1}}< 1,
$$
and
$$
D_k=\prod_{p} \frac{\delta_k(p)}{p^{k-1}}=\exp\left\{-\sum_{p}\log\left(\frac{p^{k-1}}{\delta_k(p)}\right)\right\},
$$
where the series
\begin{equation}
\label{series2}
\sum_{p}\log\left(\frac{p^{k-1}}{\delta_k(p)}\right)
\end{equation}
has positive terms. This gives immediately that $D_k\in [0,1)$ and we need only to show that $D_k\neq0.$
To do that we prove that the series \eqref{series2} converges.

For $p\geq \sqrt{k2^k}$, Lemma~\ref{modp}~g) gives
$$
\frac{p^{k-1}}{\delta_k(p)}\leq 1+\frac{k2^k}{p^2}\quad {\text{\rm and~ therefore}}\quad \log\left(\frac{p^{k-1}}{\delta_k(p)}\right)\leq \frac{k2^k}{p^2}.
$$
Thus,
$$
\sum_{p}\log\left(\frac{p^{k-1}}{\delta_k(p)}\right)\leq\sum_{p<\sqrt{k2^k}}\log\left(\frac{p^{k-1}}{\delta_k(p)}\right)+ k2^k\sum_{n=1}^{\infty}\frac{1}{n^2},
$$
where the last series converges.

We now turn to the inequalities involving the functions $g_k(n)$ for $k\geq 3$. First of all, observe that $g_k(n)<1$ if $k$ is even, and $g_k(n)>1$ if $k$ is odd, because $(k-1)/\delta_k(p)>0$.
To get some bounds for $g_k(n),$ we begin computing:
\begin{equation*}\label{g}
\delta_k(2)=\left\{
\begin{array}{cccc}
1 & \textrm{if}& k & \textrm{is odd},\\
k & \textrm{if}&  k&  \textrm{is even},\\
\end{array}\right.\\
\end{equation*}
and
\begin{equation*}
\label{delta34}
\delta_3(p)= p^2-3,\qquad \delta_4(p)= p^3-6p+8.
\end{equation*}
Recall that, by Lemma \ref{modp} e), for any $k\geq 3$, we have $
\delta_k(p)\geq (p-1)^{k-1}$. 
Let $k$ be odd. For any $p\geq 3,$ we have
$$
\frac{k-1}{\delta_k(p)}\leq \frac{2}{\delta_3(p)},\quad {\text{\rm that~ is}}\quad \delta_k(p)\geq \frac{(k-1)(p^2-3)}{2}
$$
(this is trivial when $k=3$ and, for $k\geq 5$, we have
$
(p-1)^{k-1}\geq (k-1)(p^2-3)/2$).
Similarly if $k$ is even, for any $p\geq 3,$ we have
$$
\frac{k-1}{\delta_k(p)}\leq \frac{3}{\delta_4(p)},\quad {\text{\rm that~ is}}\quad \delta_k(p)\geq \frac{(k-1)(p^3-6p+8)}{3}
$$
(this is trivial when $k=4$ and, for $k\geq 6$, we have
$
(p-1)^{k-1}\geq (k-1)(p^3-6p +8)/3
$).
It follows that when $k$ is odd
\[
g_k(n)=\prod_{p\mid n}\left(1+\frac{k-1}{\delta_k(p)}\right)\leq\left\{
\begin{array}{ccc}
k\prod_{p>2}\left(1+\frac{2}{p^2-3}\right)&\textrm{if }n \textrm{ is even},\\
\prod_{p>2}\left(1+\frac{2}{p^2-3}\right)&\textrm{if }n \textrm{ is odd}.
\end{array}\right.
\]
Similarly, for $k$ even, we obtain
\[
g_k(n)=\prod_{p\mid n}\left(1-\frac{k-1}{\delta_k(p)}\right)\geq\left\{
\begin{array}{ccc}
\frac{1}{k}\prod_{p>2}\left(1-\frac{3}{p^3-6p+8}\right)&\textrm{if }n \textrm{ is even},\\
\prod_{p>2}\left(1-\frac{3}{p^3-6p+8}\right)&\textrm{if }n \textrm{ is odd}.
\end{array}\right.
\]
It remains to give estimates for the numbers
$$
a=\prod_{p>2}\left(1+\frac{2}{p^2-3}\right)\quad\textrm{and}\quad b=\prod_{p>2}\left(1-\frac{3}{p^3-6p+8}\right) .
$$
We have
\begin{eqnarray*}
 \log a &=&\sum_{p\geq 3}\log \left(1+\frac{2}{p^2-3}\right) \leq 2\,\sum_{p\geq 3}\frac{1}{p^2-3}\\
&\leq&2\,\left[ \sum_{p=3}^{11}\frac{1}{p^2-3} +\frac{13^2}{13^2-3}\int_{13}^{\infty}\frac{dx}{x^2}  \right]<\log 2,\\
\end{eqnarray*}
and  so $a<2.$
Finally,  observe that
$$
\log b =-\sum_{p\geq 3}\log \left(1+\frac{3}{p^3-6p+5}\right)
$$
and that
\begin{eqnarray*}
\sum_{p\geq 3}\log \left(1+\frac{3}{p^3-6p+5}\right)&\leq& \sum_{p\geq 3}\frac{3}{p^3-6p+5}
\leq \frac{3}{14}+4\sum_{p\geq 5}\frac{1}{p^3}\\&\leq& \frac{3}{14}+4\left[\frac{1}{125}+
\int_{5}^{\infty}\frac{dx}{x^3}  \right]<\log 2. \\
\end{eqnarray*}
It follows that $b>1/2.$
\end{proof}

\section{The coefficients $K_{k,d}(n)$ and the proof of Theorem~\ref{thm:1}}\label{sec:proof}
Now we are ready to compute $K_{k,d}(n)$. We will show in Proposition~\ref{J} that the leading term of $K_{k,d}(n)$ is  $(n/d)^k/k!$ multiplied by the correction factor defined below.

\begin{definition}\label{theta}
For positive integers $k$ and $n$,
write
$$
\Theta_{k,n}(d)=\prod_{p\mid d,\,p\mid n} \phi_k(p)\prod_{p\mid d,\,p\nmid n} \psi_k(p),
$$
where for $d=1$ the empty product is interpreted 1.
\end{definition}

The function $\Theta_{k,n}(d)$ of the square-free number $d$ is multiplicative; that is, if $d_1$ and $d_2$ are coprime square-free numbers, then $\Theta_{k,n}(d_1d_2)=\Theta_{k,n}(d_1)\Theta_{k,n}(d_2)$.

Note also that, by  Lemma~\ref{modp}~b) and f), we have
\begin{equation}
\label{eq:PHI}
1\leq \Theta_{k,n}(d)\le k^{\omega(d)} d^{k-2}\quad \hbox{for all}\  k\geq 2,
\end{equation}
where, for a positive integer $m$, $\omega(m)$ denotes the number of distinct prime divisors of $m.$

Here and in the next section, to go straight on into computations, we need also this technical lemma.

\begin{lemma}
\label{power}
Let $x,y$ and $c\geq 1$ be real numbers and $k\in \mathbb{N}.$ If $|x-y|\le ck$,
then
$$
|x^k-y^k|< \frac{k!e^{1/c}(ce)^k}{\sqrt{2\pi k}}\,y^{k-1}.
$$
\end{lemma}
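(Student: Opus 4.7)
The plan is to reduce the bound to a single binomial expansion and then match the constants via Stirling's formula. Writing $t=x-y$, so that $|t|\le ck$, the triangle inequality applied to the binomial expansion of $(y+t)^k$ gives
\[
|x^k-y^k|\le\sum_{i=1}^{k}\binom{k}{i}|y|^{k-i}(ck)^i=(|y|+ck)^k-|y|^k.
\]
In the regime relevant for the applications, namely $|y|\ge 1$, each factor $|y|^{k-i}$ with $i\ge 1$ is at most $|y|^{k-1}$, whence the right hand side is bounded by $|y|^{k-1}\bigl((1+ck)^k-1\bigr)<|y|^{k-1}(1+ck)^k$. This first reduction isolates $y^{k-1}$ exactly as needed and absorbs the entire dependence on $y$.

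The heart of the estimate is then the factorisation
\[
(1+ck)^k=(ck)^k\Bigl(1+\frac{1}{ck}\Bigr)^k\le(ck)^k\,e^{1/c},
\]
coming from $1+u\le e^u$ applied with $u=1/(ck)$. Rewriting $(ck)^k=(ce)^k(k/e)^k$ and invoking the strict Stirling inequality $k!>\sqrt{2\pi k}\,(k/e)^k$ converts $(k/e)^k$ into $k!/\sqrt{2\pi k}$, so multiplying the two bounds yields
\[
|x^k-y^k|<|y|^{k-1}\cdot\frac{k!\,(ce)^k}{\sqrt{2\pi k}}\cdot e^{1/c}=\frac{k!\,e^{1/c}(ce)^k}{\sqrt{2\pi k}}\,y^{k-1},
\]
which is exactly the claimed inequality.

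The only step requiring thought is the reorganisation of $\sum\binom{k}{i}|y|^{k-i}(ck)^i$ as a constant times $|y|^{k-1}$, which is where the hypothesis $|y|\ge 1$ enters; everything after that is a matter of recognising the identity $(ck)^k=(ce)^k(k/e)^k$ and using a single application of Stirling's formula to line up the constants with those appearing in the statement. No further ingredients are needed, and the strict inequality is automatic from either the discarded $-1$ term or the strict form of Stirling.
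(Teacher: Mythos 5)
Your proof is correct and follows essentially the same route as the paper's: binomial expansion of $(y+t)^k$, factoring out $y^{k-1}$, bounding the remaining sum by $(1+ck)^k\le (ck)^k e^{1/c}$, and converting $(ck)^k$ via Stirling. Your explicit flagging of the hypothesis $|y|\ge 1$ is a small improvement in precision, since the paper's proof uses the same fact silently (and it does hold, as $y=n/d\ge 1$ in all applications).
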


\begin{proof}
Let $x=y+\theta ck$. We then have
$$
x^k=y^k + \sum_{i=0}^{k-1}\binom{k}{i}y^{i}(ck\theta)^{k-i}
$$
and
$$
|x^k-y^k|  \le  y^{k-1}\sum_{i=0}^{k-1}\binom{k}{i}(ck)^{k-i}
 <  y^{k-1}\sum_{i=0}^{k} \binom{k}{i} (ck)^{k-i}= y^{k-1} (ck+1)^{k}.
$$
As the exponential function with base greater than $1$ is increasing, we obtain
$$(ck+1)^k =(ck)^k \left[\left(1+\frac{1}{ck}\right)^{ck}\right]^{1/c}<(ck)^k\, e^{1/c}.$$
By Stirling's formula,
\begin{equation}
\label{eq:Stir}
k!>\left(\frac{k}{e}\right)^k {\sqrt{2\pi k}}\quad {\text{\rm and}}\quad k^k<\frac{k! e^k}{\sqrt{2\pi k}}.
\end{equation}
Inserting the inequality from the right-hand side of~\eqref{eq:Stir}, we get the desired conclusion.
\end{proof}

\begin{proposition}
\label{J}
Let $k\geq 1$ and $1\leq d\leq n$. Then
\begin{equation*}
\left|K_{k,d}(n)-\Theta_{k,n}(d)\frac{(n/d)^k}{k!}\right|\le \Theta_{k,n}(d)\left(
k+\frac{e^{k+1}}{\sqrt{2\pi k}}
\right)(n/d)^{k-1}.
\end{equation*}
\end{proposition}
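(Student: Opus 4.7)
The plan is to partition $\mathcal{K}_{k,d}(n)$ by the residues of $(y_1,\dots,y_k)$ modulo $d$, count the admissible residue patterns via the Chinese Remainder Theorem combined with Lemma~\ref{modp}(a), and then count the lifts of each pattern by stars-and-bars. The main term $\Theta_{k,n}(d)(n/d)^k/k!$ will emerge from the dominant binomial coefficient, and the error will be controlled by Lemma~\ref{power}. Concretely, since $d\mid x$, I would write $x=dx'$ with $x'\ge 1$ and, for each $y_j$, let $y_j^*\in\{1,2,\dots,d\}$ be the unique representative of $y_j \bmod d$ in this shifted range, writing $y_j=y_j^*+ds_j$ with $s_j\ge 0$. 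Because $d$ is square-free, the condition $d\mid\gcd(x,\prod y_i)$ translates into the requirement that for every prime $p\mid d$, some $y_j^*\equiv 0\pmod p$. Substituting into $x+\sum y_j=n$ then yields $x'+\sum_j s_j=(n-\sum_j y_j^*)/d=:M$, which also forces $\sum y_j^*\equiv n\pmod d$.

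I would then count the admissible residue tuples $(y_1^*,\dots,y_k^*)\in\{1,\dots,d\}^k$ satisfying both $\sum y_j^*\equiv n\pmod d$ and the local vanishing condition. By the Chinese Remainder Theorem (using that $d$ is square-free), this count factors as a product over the primes $p\mid d$ of the number of patterns in $\{0,\dots,p-1\}^k$ with $\sum\equiv n\pmod p$ and some entry equal to $0$, which by Lemma~\ref{modp}(a) equals $\phi_k(p)$ when $p\mid n$ and $\psi_k(p)$ when $p\nmid n$; multiplying gives exactly $\Theta_{k,n}(d)$ admissible residue patterns. For each such pattern, the number of lifts $(x',s_1,\dots,s_k)$ with $x'\ge 1$, $s_j\ge 0$, and $x'+\sum s_j=M$ is the stars-and-bars count $\binom{M+k-1}{k}$, interpreted via the polynomial $(M+k-1)(M+k-2)\cdots M/k!$; this vanishes throughout the edge range $-k<M\le 0$, and the hypothesis $d\le n$ precludes $M\le -k$, so the polynomial value agrees with the actual lift count in every case. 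Hence $K_{k,d}(n)=\sum_{(y_j^*)}\binom{M+k-1}{k}$, summed over the $\Theta_{k,n}(d)$ admissible patterns.

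For the main estimate, I would set $\epsilon_i=M+i-n/d=i-\sum_j y_j^*/d$ for $i=0,\dots,k-1$. Since $\sum y_j^*\in[k,kd]$, each $|\epsilon_i|\le k$. Expanding $\prod_{i=0}^{k-1}(M+i)=\prod_{i=0}^{k-1}(n/d+\epsilon_i)$ multinomially and bounding each $|\epsilon_i|\le k$ termwise gives
\[
\left|\prod_{i=0}^{k-1}(M+i)-(n/d)^k\right|\le (n/d+k)^k-(n/d)^k.
\]
An application of Lemma~\ref{power} with $c=1$, $x=n/d+k$, $y=n/d$ (so $|x-y|=k$) yields $(n/d+k)^k-(n/d)^k<k!\,e^{k+1}/\sqrt{2\pi k}\cdot(n/d)^{k-1}$. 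Dividing through by $k!$ produces
\[
\left|\binom{M+k-1}{k}-\frac{(n/d)^k}{k!}\right|<\frac{e^{k+1}}{\sqrt{2\pi k}}(n/d)^{k-1},
\]
which is actually slightly sharper than what the proposition asks for; summing over the $\Theta_{k,n}(d)$ admissible patterns and using $e^{k+1}/\sqrt{2\pi k}\le k+e^{k+1}/\sqrt{2\pi k}$ yields the stated bound.

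The main obstacle I anticipate is the clean justification of the CRT step: that the number of residue patterns in $\{1,\dots,d\}^k$ satisfying both a linear congruence and the pointwise vanishing conditions truly splits multiplicatively across the primes dividing $d$, with each local factor matching precisely the count in Lemma~\ref{modp}(a). A secondary care point is verifying that the polynomial-formula interpretation of $\binom{M+k-1}{k}$ matches the genuine lift count throughout the range of $M$ encountered; this is exactly where the hypothesis $d\le n$ enters, and it also explains why $(n/d)^{k-1}$ (rather than a bound uniform in $M$) is the natural error scale.
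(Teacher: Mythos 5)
Your proof is correct and follows essentially the same route as the paper: reduce the composition modulo $d$, factor the count of admissible residue tuples over the primes dividing $d$ via the Chinese remainder theorem and Lemma~\ref{modp}~a) to obtain $\Theta_{k,n}(d)$, and compare the lift count to $(n/d)^k/k!$ using Lemma~\ref{power} with $c=1$. Your only departure is taking residues in $\{1,\dots,d\}$ so that the lifts are counted exactly by $\binom{M+k-1}{k}$ (the paper instead sandwiches that count between the number of compositions and of generalized compositions of $m$, which is where its extra $k(n/d)^{k-1}$ term originates), and this correctly yields the slightly sharper bound you obtain.
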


\begin{proof}
If $d=1$, we have
$$
K_{k,1}(n)=K_{k}(n)=\frac{n^{k}}{k!}+\theta_1 kn^{k-1}=\frac{n^{k}}{k!}+\theta_2\left(
k+\frac{e^{k+1}}{\sqrt{2\pi k}}
\right)n^{k-1}.
$$
Suppose that  $d>1$. By definition, $K_{k,d}=\#\mathcal{K}_{k,d}(n)$ and the elements of $\mathcal{K}_{k,d}(n)$  are the solutions $(x,y_1,\ldots,y_k)$ of the equation
\begin{equation}
\label{gen}
n=x+\sum_{j=1}^ky_j, \quad \textrm{ for which } d \textrm{ divides } x \textrm{ and }y_1\cdots y_k.
\end{equation}
Write $x=dX$, and $y_j=y_j^*+dY_j$ with $X>0$, $Y_j\geq 0$ and $y_j^*\in \{0,\ldots,d-1\}$, for each $j\in\{1,\ldots,k\}$. Note that, as $d$ divides $\prod_{j=1}^ky_j$, for each prime factor $p$ of $d$, there exists at least an index $j_p\in \{1,\ldots,k\}$ with $p\mid y_{j_p}^*$. Clearly, $(y_1^*,\ldots,y_k^*)$ and $(X,Y_1,\ldots,Y_k)$ are uniquely determined by $(x,y_1,\ldots,y_k)$, and similarly, the vector $(x,y_1,\ldots,y_k)$ is uniquely determined by both $(y_1^*,\ldots,y_k^*)$ and $(X,Y_1,\ldots,Y_k)$.

We now determine the number of possible tuples $(y_1^*,\ldots,y_k^*)$. Reducing ~\eqref{gen} modulo $d$, we get
\begin{equation}
\label{eq:11}
y_1^*+\cdots+y_k^*\equiv n\pmod d, \quad \textrm{ with } y_1^*\cdots y_k^*\equiv 0\pmod d.
\end{equation}
Reducing congruence~\eqref{eq:11} further modulo $p$, where $p$ is an arbitrary prime factor of $d$, we get a solution to the equation
\begin{equation}
\label{eq:gen10}
y_{1,p}^*+\cdots +y_{k,p}^*\equiv n\pmod p,\end{equation}
 with  $y_{j_p,p}^*\equiv 0\pmod p$  for at least  one $j_p\in\{1,\ldots,k\}.$

This shows that $(y_1^*,\ldots,y_k^*)$ determines a solution $(y_{1,p}^*,\ldots,y_{k,p}^*)$ of~\eqref{eq:gen10}, for each prime factor $p$ of $d$.

Conversely, for each prime factor $p$ of $d,$ let $(y_{1,p}^*,\ldots,y_{k,p}^*)\in \mathbb{Z}_p^{k}$ be a solution of~\eqref{eq:gen10}. Fix $j\in\{1,\ldots,n\}$, consider the system $y_j^*\equiv y_{j,p}^*\pmod p$ for $p\mid d$ and apply the Chinese remainder theorem to find a unique solution modulo $d.$ Looking at the equation related to $p$ in each system, we have $y_1^*+\cdots+y_k^*\equiv n\pmod p$ for all $p\mid d$ and, since $d$ is square-free, it follows that
$(y_1^*,\ldots,y_k^*)\in \mathbb{Z}_d^k$ is a solution of~\eqref{eq:11}. Note that  $y_1^*\cdots y_k^*\equiv 0\pmod d$ is a consequence of  $y_{j_p,p}^*\equiv 0\pmod p$ for at least  one $j_p\in\{1,\ldots,k\},$ because this implies that $p$ divides $y_{j_p}^*$ and consequently $d=\prod_{p\mid d}p$ divides $y_1^*\cdots y_k^*.$

Now, by Lemma~\ref{modp}~a), the number of solutions of~~\eqref{eq:gen10} is either $\phi_k(p)$ (if $p\mid n$) or $\psi_k(p)$ (if $p\nmid n$). Hence, the number of possibilities for $(y_1^*,\ldots,y_k^*)$ is
$$
\prod_{p\mid d, p\mid n}\phi_k(p)\prod_{p\mid d, p\nmid n} \psi_k(p)=\Theta_{k,n}(d).
$$
Fix $(y_1^*,\ldots,y_k^*)$ and let us determine the number of possible tuples $(X,Y_1,\ldots,Y_k)$. From ~\eqref{gen}, we  get the equation
\begin{equation}
\label{eq:uuu}
X+\sum_{j=1}^k Y_j=\frac{n-\sum_{j=1}^k y_j^*}{d},
\end{equation}
where the right-hand side  is an integer.  Recalling that $X$ and $Y_j$ are non-negative, it follows that the number of solutions of~\eqref{eq:uuu}, with respect to the natural number $m:=(n-\sum_{j=1}^ky_j^*)/d$, is between the number of $(k+1)$-compositions of $m$ and the number of generalized $(k+1)$-compositions of $m$. Thus, by~\eqref{co}, its size is
$$
\frac{m^k}{k!}+\theta_3 km^{k-1}.
$$
Since $y_j^*\in \{0,\ldots,d-1\}$, we have
$$
0\leq \sum_{j=1}^k \frac{y_j^*}{d}\leq \left(1-\frac{1}{d}\right)k\leq k.
$$
So, $|m-n/d|\le k$. Therefore, applying Lemma \ref{power} with $c=1,$ we get
$$
\left|\frac{m^k}{k!}-\frac{(n/d)^k}{k!}\right|\le \frac{e^{k+1}}{\sqrt{2\pi k}}(n/d)^{k-1}.
$$
To estimate $km^{k-1}$, we note that $m\leq n/d$ gives $km^{k-1}\le k(n/d)^{k-1}.$ Thus, for a fixed $(y_1^*,\ldots,y_k^*)$, the number of acceptable integer solutions $(X,Y_1,\ldots,Y_k)$ to equation~\eqref{eq:uuu} is
$$
\frac{(n/d)^k}{k!}+\theta_4\left[\left(k+\frac{e^{k+1}}{\sqrt{2\pi k}}\right)(n/d)^{k-1}\right].
$$
Except for the value of $\theta_4$, this does not depend on $(y_1^*,\ldots,y_k^*)$. Summing up the above expression over the  possible $(y_1^*,\ldots,y_k^*)\in \mathbb{Z}_d^k$, we get the desired result.
\end{proof}

The following elementary observation will be used in the proof of Theorem~\ref{thm:1}.

\begin{lemma}
\label{tk}
For $k\geq 0$, let
$$
I_k(x)=\int_{x}^{\infty} \frac{(\log t)^k}{t^2} dt
$$
as a  function in the real variable $x>0$.
Then $$
I_k(x)\leq 2k! \frac{(\log x)^k}{x}\quad\quad \text{for all}\quad\quad x\geq e^{4/3}.
$$
\end{lemma}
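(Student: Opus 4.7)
The plan is to obtain a clean recurrence for $I_k(x)$ via integration by parts and then bootstrap it by induction on $k$. Specifically, taking $u=(\log t)^k$ and $dv=dt/t^2$ gives the identity
\begin{equation*}
I_k(x)=\frac{(\log x)^k}{x}+k\,I_{k-1}(x)\qquad (k\geq 1),
\end{equation*}
while $I_0(x)=1/x$ is computed directly.

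Next I would prove the claimed inequality $I_k(x)\le 2k!(\log x)^k/x$ by induction on $k$. The base case $k=0$ is immediate since $1/x\le 2/x$. For $k=1$ and any $x\ge e^{4/3}$ (so in particular $\log x\ge 1$), the recurrence gives $I_1(x)=\log x/x+1/x\le 2\log x/x$.

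For the inductive step with $k\ge 2$, assuming $I_{k-1}(x)\le 2(k-1)!(\log x)^{k-1}/x$, the recurrence yields
\begin{equation*}
I_k(x)\le \frac{(\log x)^k}{x}+2k!\,\frac{(\log x)^{k-1}}{x},
\end{equation*}
and the desired bound $I_k(x)\le 2k!(\log x)^k/x$ reduces, after dividing by $(\log x)^{k-1}/x$, to the linear inequality $\log x+2k!\le 2k!\log x$, that is, $\log x\ge 2k!/(2k!-1)$. For $k\ge 2$ we have $2k!-1\ge 3$, so $2k!/(2k!-1)\le 4/3$, and the hypothesis $x\ge e^{4/3}$ is exactly what is needed.

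There is essentially no main obstacle here: the only mildly delicate point is that the clean induction step only kicks in for $k\ge 2$ (since for $k=1$ the analogous reduction would demand $\log x\ge 2$), which is why $k=1$ is handled separately using the exact value $I_0(x)=1/x$ rather than the looser bound $2/x$. The threshold $e^{4/3}$ in the statement is essentially forced by the $k=2$ case of this reduction.
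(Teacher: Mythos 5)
Your proof is correct and takes essentially the same route as the paper's: integration by parts yields the recurrence $I_k(x)=\frac{(\log x)^k}{x}+kI_{k-1}(x)$, the cases $k=0,1$ are checked directly, and the inductive step is closed using $\log x\ge 4/3$. The only (cosmetic) difference is that you isolate the exact threshold $\log x\ge 2k!/(2k!-1)$, whereas the paper bounds $1+2(k+1)!/\log x\le 2(k+1)!$ directly; both reductions are equivalent.
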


\begin{proof}
For $k=0$, we have
$$
I_0(x)=\int_{x}^\infty \frac{dt}{t^2}=\frac{1}{x}
$$
and the lemma is trivial. For $k\geq 1$, we have
\begin{eqnarray*}
I_{k-1} (x) & = & \int_{x}^{\infty} \frac{(\log t)^{k-1}}{t}\frac{dt}{t} = \frac{(\log t)^{k}}{k} \frac{1}{t}\Big|_{x}^{\infty}
+ \int_{x}^{\infty}  \frac{\log t^{k} }{k}\frac{1}{t^2} dt\\
& = &-\frac{(\log x)^{k}}{kx}+\frac{1}{k}I_k(x)
\end{eqnarray*}
which gives
$$
I_k(x)=\frac{(\log x)^k}{x}+kI_{k-1}(x).
$$
Using this relation,  the lemma follows by induction on $k\geq 1$. In fact,
$$
I_1(x)=\frac{\log x}{x}+\frac{1}{x}\leq \frac{2\log x}{x}\quad {\text{\rm for~ any}}\quad x\geq e,
$$
and so, in particular, for all $x\geq e^{4/3}.$ Moreover, by the inductive hypothesis,
\begin{equation*}
I_{k+1}(x)=\frac{(\log x)^{k+1}}{x}+(k+1)I_{k}(x)\leq \frac{(\log x)^{k+1}}{x}\left[1+\frac{2(k+1)!}{\log x}\right]\\
\end{equation*}
and, for $x\geq e^{4/3},$ we get
\begin{eqnarray*}1+\frac{2(k+1)!}{\log x}& \leq 1+\displaystyle{\frac{3(k+1)!}{2}}\leq 2(k+1)!
\end{eqnarray*}
for all  $k\geq 1.$
\end{proof}

\begin{proof}[Proof of Theorem~$\ref{thm:1}$] Due to the cases discussed in the Introduction, we can assume that $k\geq 2$ and $n\geq 4$. We begin by applying \eqref{eq:incexc} together with Proposition~\ref{J} 
\begin{eqnarray}
\label{eq:mainformula}
A_{k}(n) & = & \sum_{1\leq d\leq n} \mu(d) K_{k,d}(n)\nonumber\\
&=& \sum_{1\leq d\leq n} \mu(d)\Theta_{k,n}(d)\left(\frac{(n/d)^k}{k!}+\theta_d\left(k+\frac{e^{k+1}}{\sqrt{2\pi k}}\right)
(n/d)^{k-1}\right)
\nonumber\\
& = &   M+E
\end{eqnarray}
where
$$
M=\sum_{1\leq d\leq n} \mu(d)\Theta_{k,n}(d)\frac{(n/d)^k}{k!}
$$
is the main term and
$$
E=\sum_{1\leq d\leq n} \mu(d)\Theta_{k,n}(d)\,\theta_d\left(k+\frac{e^{k+1}}{\sqrt{2\pi k}}\right)
(n/d)^{k-1}
$$
is the error term.

Thus, by \eqref{eq:PHI}, we get
\begin{eqnarray}
\label{eq:E1}
E&=& 
\theta_1\left(k+\frac{e^{k+1}}{\sqrt{2\pi k}}\right)
n^{k-1}
\sum_{1\leq d\leq  n} \frac{\Theta_{k,n}(d)}{d^{k-1}}\\\nonumber
&=&\theta_2\left( k+\frac{e^{k+1}}{\sqrt{2\pi k}}   \right)n^{k-1} \sum_{1\leq d\leq n} \frac{k^{\omega(d)}}{d}.
\end{eqnarray}
We want to find a better estimate for $E$ through an estimate for the function:
$$
\Omega_k(x)=\sum_{1\leq d\leq x} \frac{k^{\omega(d)}}{d},
$$
defined for any real number $x\ge 1.$ From~\cite[$(3.20)$]{RS}, we have
$$
\sum_{p\leq x}\frac{1}{p}\leq \log\log x+B+\frac{1}{(\log x)^2},
$$
for each real number $x>1$, where $B$ is the Mertens' constant~\cite[$(2.10)$]{RS}. As $B\leq 0.27$, we have, in particular, that
$$
\sum_{p\leq x}\frac{1}{p}\leq \log\log x+1,\quad\quad \text{ for any }\ x\geq 4.
$$
Hence, for any real number $x\ge 4$, we have also
\begin{eqnarray}
\label{eq:calc}
\Omega_k(x)& \le &
\prod_{p\le n} \left(1+\frac{k}{p}\right)\le
\exp\left(\sum_{p\le x} \frac{k}{p}\right)\nonumber \\
& \leq & \exp\left(k(\log\log x+1)\right)=(e\log x)^k.
\end{eqnarray}
Using ~\eqref{eq:E1}, we find
\begin{equation}
\label{eq:E}
E=\theta_3 \left(k+\frac{e^{k+1}}{\sqrt{2\pi k}}\right)(e\log n)^kn^{k-1}.
\end{equation}

We now look at the main term $M$. We have
\begin{equation}
\label{eq:M}
M=\frac{n^k}{k!}\sum_{d\geq 1} \mu(d)\frac{\Theta_{k,n}(d)}{d^{k}}-\frac{n^k}{k!}\sum_{d> n}\mu(d)\frac{\Theta_{k,n}(d)}{d^k}=\frac{n^k}{k!}(M_1-M_2).
\end{equation}
We first compute $$M_1=\sum_{d\ge 1} \mu(d)\frac{\Theta_{k,n}(d)}{d^{k}}$$ and then we estimate $$M_2=\sum_{d> n} \mu(d)\frac{\Theta_{k,n}(d)}{d^{k}}.$$
For $M_1$ we have, by the multiplicativity of $\Theta_{k,n}(d)$ as a function of $d$,
\begin{eqnarray}
\label{eq:M1}
M_1 & = & \prod_{p\mid n} \left(1-\frac{\phi_k(p)}{p^k}\right)\prod_{p \nmid n}
 \left(1-\frac{\psi_k(p)}{p^k}\right)\nonumber\\
 & = &
\prod_{p\mid n}
\left(1-\frac{\phi_k(p)}{p^k}\right)
\left(1-\frac{\psi_k(p)}{p^{k}}\right)^{-1}
\prod_{p}
 \left(1-\frac{\psi_k(p)}{p^k}\right)\nonumber\\
 & = & C_k f_k(n),
 \end{eqnarray}
where the last equality arises upon observing that, by \eqref{etadelta},
 $$
\left(1-\frac{\phi_k(p)}{p^k}\right)\left(1-\frac{\psi_k(p)}{p^{k}}\right)^{-1}=
\frac{p^k-\phi_k(p)}{p^k-\psi_k(p)}=1+\frac{(-1)^k}{p^k-\psi_k(p)}.
$$
For $M_2$, we use~\eqref{eq:PHI} to conclude that
$$
|M_2|\le  \sum_{d> n} \frac{k^{\omega(d)}}{d^2}.
$$
By the Euler summation formula on $\Omega_k(x)$ (see Theorem 3.1 in \cite{Apostol}), we get
$$
\sum_{x< d\le X} \frac{k^{\omega(d)}}{d^2}=
\int_x^X \frac{(\Omega_k(t))'}{t}dt =\frac{\Omega_k(t)}{t}\Big|_{x}^X+\int_{x}^X \frac{\Omega_k(t)}{t^2}dt,
$$
for any $X>x.$ Since ~\eqref{eq:calc} implies that  $\Omega_k(t)=O((e\log t)^k)$, taking $X\to\infty,$ the first summand is equal to $-\Omega_k(x)/x$ and, in particular, is negative. Therefore, using again ~\eqref{eq:calc}, we deduce that for $x\geq 4$:
$$
\sum_{x< d} \frac{k^{\omega(d)}}{d^2}\le \int_x^\infty \frac{\Omega_k(t)}{t^2}dt\leq e^k \int_{x}^{\infty} \frac{(\log t)^k}{t^2} dt.
$$
Hence, from Lemma~\ref{tk}, we obtain
\begin{equation}
\label{eq:M2}
|M_2|\leq \frac{2k! e^k (\log n)^k }{n}.
\end{equation}
Since, for $k\geq 2$, we have $$\left(k+\frac{e^{k+1}}{\sqrt{2\pi k}}\right)e^k+2e^k= \left(k+2+\frac{e^{k+1}}{\sqrt{2\pi k}}\right)e^k<\frac{(2+e)e^{2k}}{\sqrt{2\pi k}},$$  
the desired conclusion follows finally from \eqref{eq:mainformula},~\eqref{eq:E},~\eqref{eq:M},~\eqref{eq:M1} and~\eqref{eq:M2}.
\end{proof}

\section{Proof of Theorem~\ref{thm:allcoprime}}\label{sec:allcoprime}

We start with two definitions and a proposition which play a role similar to Definitions~\ref{def:kdn},~\ref{theta} and Proposition~\ref{J}.

\begin{definition}
\label{def:B2k}
Given positive integers $k$ and $n$, write $\mathcal{B}_{k,d}(n)$ for the set of $k$-compositions $(x_1,\ldots,x_k)$ of $n$ such that $\gcd(x_i,x_j)$ is coprime to $d$, for every distinct $i,j\in \{1,\ldots,k\}$, that is,
\begin{equation*}
\begin{aligned}
\mathcal{B}_{k,d}(n)=\{(x_1,\ldots,x_k)\in \mathbb{N}^{k}:\ & n=x_1+\cdots +x_k \textrm{ and } p\nmid \gcd(x_i,x_j), \textrm{ for each}\\
& \textrm{prime }p\mid d \textrm{ and for distinct }i,j\in \{1,\ldots,k\}\}.
\end{aligned}
\end{equation*} and put $B_{k,d}(n)=\#\mathcal{B}_{k,d}(n).$
\end{definition}

Clearly $\mathcal{B}_{k,1}(n)=\mathcal{K}_{k-1}(n)$ and  $\mathcal{B}_k(n)\subseteq \mathcal{B}_{k,d}(n)$.

\begin{definition}
\label{Xi}
For positive integers $k$ and $n$,
write
$$
\Xi_{k,n}(d)=\prod_{p\mid d, p\mid n}\eta_k(p) \prod_{p\mid d, p\nmid n}\delta_k(p),
$$
where for $d=1$ the empty product is taken to be 1. \end{definition}
Note that, by Lemma~\ref{modp}~ e) and f), we have
\begin{equation}
\label{eq:Xi}
1\leq \Xi_{k,n}(d)\le  d^{k-1}.
\end{equation}

\begin{proposition}
\label{JJ}
Let $k\geq 3$ and $1\leq d\leq n$. Then
$$
\left|B_{k,d}(n)-\Xi_{k,n}(d)
\frac{(n/d)^{k-1}}{(k-1)!}\right|\le \Xi_{k,n}(d)\left( k-1+\frac{e^{2/3}(3e/2)^{k-1}}{\sqrt{2\pi(k-1)}} \right)n^{k-2}d.
$$
\end{proposition}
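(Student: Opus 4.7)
The plan is to mirror the proof of Proposition~\ref{J}, but with Lemma~\ref{modp}~d) (which counts solutions having at most one zero coordinate) replacing Lemma~\ref{modp}~a), since the coprimality must now hold pairwise instead of between a single distinguished coordinate and the rest.

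Each $(x_1,\ldots,x_k)\in\mathcal{B}_{k,d}(n)$ is uniquely written as $x_i=x_i^*+dX_i$ with $x_i^*\in\{0,\ldots,d-1\}$ and $X_i\ge 0$; the constraint $x_i\ge 1$ translates into $X_i\ge 1$ exactly when $x_i^*=0$. Reducing $n=\sum x_i$ modulo $d$ gives $\sum x_i^*\equiv n\pmod d$, and the pairwise coprimality condition is equivalent, since $d$ is square-free, to the requirement that for every prime $p\mid d$ at most one of the $x_i^*$'s vanishes modulo $p$. Applying the Chinese Remainder Theorem one prime at a time, together with Lemma~\ref{modp}~d), shows that the number of residue tuples $(x_1^*,\ldots,x_k^*)\in\{0,\ldots,d-1\}^k$ obeying both conditions is precisely $\Xi_{k,n}(d)$.

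A key observation is that the number $Z$ of coordinates with $x_i^*=0$ is at most $1$: otherwise two distinct coordinates would be divisible by \emph{every} prime $p\mid d$, contradicting the coprimality constraint. Fix a valid residue tuple and set $m=(n-\sum_i x_i^*)/d$. A stars-and-bars count then gives $\binom{m+k-1}{k-1}$ admissible tuples $(X_1,\ldots,X_k)$ when $Z=0$, and $\binom{m+k-2}{k-1}$ when $Z=1$ (after shifting the one forced $X_{i_0}\ge 1$ down by one). In either case, expanding the binomial in analogy with~\eqref{co} yields $m^{k-1}/(k-1)!+\theta(k-1)m^{k-2}$ with $|\theta|\le 1$.

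The passage from $m$ to $n/d$ is quantified as follows. Since $0\le\sum_i x_i^*\le k(d-1)$, we have $|m-n/d|\le k\le(3/2)(k-1)$ for $k\ge 3$, so Lemma~\ref{power} applied with $c=3/2$ and exponent $k-1$ yields
$$
\left|\frac{m^{k-1}}{(k-1)!}-\frac{(n/d)^{k-1}}{(k-1)!}\right|\le\frac{e^{2/3}(3e/2)^{k-1}}{\sqrt{2\pi(k-1)}}\left(\frac{n}{d}\right)^{k-2}.
$$
Combined with $m^{k-2}\le(n/d)^{k-2}\le n^{k-2}\le n^{k-2}d$ (the last using $d\ge 1$), summing the per-tuple contribution over the $\Xi_{k,n}(d)$ valid residue tuples produces the stated estimate. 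The main obstacle is the bookkeeping: one must verify that the prime-by-prime ``at most one $x_i^*\equiv 0\pmod p$'' conditions combine via CRT to give exactly $\Xi_{k,n}(d)$ residue tuples, and that the small discrepancy between the $Z=0$ and $Z=1$ binomials is absorbed uniformly into the single error term $(k-1)m^{k-2}$.
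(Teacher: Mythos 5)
Your argument follows the paper's proof of Proposition~\ref{JJ} essentially verbatim: the same decomposition $x_i=x_i^*+dX_i$, the same use of Lemma~\ref{modp}~d) with the Chinese Remainder Theorem to count the $\Xi_{k,n}(d)$ residue tuples, and the same application of Lemma~\ref{power} with $c=3/2$; your explicit $Z=0$ versus $Z=1$ bookkeeping is just a slightly more careful version of the paper's sandwiching between compositions and generalized compositions. The only blemish is that your claim ``$Z\le 1$'' is vacuous for $d=1$ (where all $x_i^*=0$), so that case should be treated separately as the paper does, but there the proposition is immediate from \eqref{co}.
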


\begin{proof}
The proof of this proposition is very similar to the proof of Proposition~\ref{J}. 
For $d=1$ the statement is trivial because, by \eqref{co},
$$
B_{k,1}(n)=K_{k-1}(n)=\frac{n^{k-1}}{(k-1)!}
+\theta_1(k-1)n^{k-2}.
$$
Let $d>1.$ By Definition~\ref{def:B2k}, the elements  $(x_1,\ldots,x_k)$ of $\mathcal{B}_{k,d}(n)$ are the solutions  of the equation
\begin{equation}\label{gen2}
n=x_1+\cdots +x_k \ \textrm{for which } d \textrm{ is coprime to }\gcd(x_i,x_j), \textrm{ for every }i<j.
\end{equation}
Write $x_i=dX_i+x_{i}^*$ with $X_i\geq 0$ and $x_i^*\in \mathbb{Z}_d$, for each $i\in \{1,\ldots,k\}.$
First we examine the possible tuples  $(x_1^*,\ldots,x_k^*)\in \mathbb{Z}_d^{k}$ which can arise.

Reducing ~\eqref{gen2} modulo $d$, we get
\begin{equation}
\label{eq:112}
x_1^*+\cdots+x_k^*\equiv n\pmod d \quad \textrm{with }p\nmid \gcd(x_i^*,x_j^*), \textrm{ for each }p\mid d \textrm{ and }i\neq j,
\end{equation}
and thus  $(x_1^*,\ldots,x_k^*)$ belongs to the set
$$\begin{aligned}
\mathcal{S}=\{(x_1^*,\ldots,x_k^*)\in \mathbb{Z}_d^{k}\ :&\  x_1^*+\cdots+x_k^*\equiv n\pmod d \quad \textrm{with }p\nmid \gcd(x_i^*,x_j^*), \\ & \textrm{ for each }p\mid d \textrm{ and }i\neq j\}.
\end{aligned}
$$

Reducing~\eqref{eq:112} further modulo $p$, where $p$ is an arbitrary prime factor of $d$, we get a unique solution to the  equation
\begin{equation}
\label{eq:gen102}
x_{1,p}^*+\cdots+x_{k,p}^*\equiv n\pmod p \quad \textrm{with }x_{i,p}^*\in  \mathbb{Z}_p\  \textrm{and } x_{i,p}^*= 0\textrm{ for at most one } i.
\end{equation}
Note that, by Lemma~\ref{modp}~d), the number of solutions of ~\eqref{eq:gen102} is either $\eta_k(p)$ (if $p\mid n$) or $\delta_k(p)$ (if $p\nmid n$). Moreover, if we consider a solution $(x_{1,p}^*,\ldots,x_{k,p}^*)\in\mathbb{Z}_p^{k}$ of~\eqref{eq:gen102} for any prime factor $p$ of $d$ and apply the Chinese remainder theorem  in each one of the $k$ coordinates, we get a unique solution $(x_{1}^*,\ldots,x_{k}^*)\in\mathbb{Z}_d^{k}$ of~\eqref{eq:112} with $x_{i}^*\equiv x_{i,p}^*\pmod p$, for all $i\in \{1,\ldots,k\}$ and all $p\mid d.$
Hence,
\begin{equation}
\label{eq:prod}
\#\mathcal{S}=\prod_{p\mid d,p\mid n}\eta_k(p)\prod_{p\mid d, p\nmid n}\delta_k(p)=\Xi_{k,n}(d).
\end{equation}

Now we fix  $x^*=(x_1^*,\ldots,x_k^*)\in\mathcal{S}$  and we determine the possible tuples $X=(X_1,\ldots,X_k)$
which are compatible with~\eqref{gen2}, that is, the solutions  $\mathcal{T}_{x^*}$ of
\begin{equation*}\label{eq:uuu2}
X_1+\cdots+X_k=\frac{(n-\sum_{i=1}^k x_{i}^*)}{d},
\end{equation*}
where the right-hand side $m=(n-\sum_{i=1}^{k}x_i^*)/d$ is an integer.
Clearly there is a bijection between  $\mathcal{B}_{k,d}(n)$ and
$$
\{(x^*,X):x^*\in  \mathcal{S}, X\in \mathcal{T}_{x^*}\}.
$$
So, to estimate $B_{k,d}(n)$ we just need to estimate $\#\mathcal{T}_{x^*}$, for every  $x^*\in \mathcal{S}.$

 Recalling that $X_i$ is non-negative, it follows from ~\eqref{co} that the number of solutions of~\eqref{eq:uuu2}, with respect to  $m$, is
 $$
 \#\mathcal{T}_{x^*}=\frac{m^{k-1}}{(k-1)!}  +  \theta_2(k-1)m^{k-2}.
 $$
As $m\leq n/d$, we have $(k-1)m^{k-2}\le (k-1)(n/d)^{k-2}$. Moreover, since $x_{i}^*<d,$ we have also $m=n/d+\theta_3k$. As $k\geq 3$, we obtain  $m=n/d+(3/2)\theta_4(k-1)$. Since $n/d\geq 1$, Lemma~\ref{power} applies with $c=3/2$ giving:
$$
\frac{m^{k-1}}{(k-1)!}=\frac{(n/d)^{k-1}}{(k-1)!}+\theta_5\frac{e^{2/3}(3e/2)^{k-1}}{\sqrt{2\pi(k-1)}}(n/d)^{k-2}.
$$
It follows that
\begin{equation}\label{eq:ult}
\#\mathcal{T}_{x^*}=\frac{(n/d)^{k-1}}{(k-1)!}+\theta_6\left(k-1+\frac{e^{2/3}(3e/2)^{k-1}}{\sqrt{2\pi(k-1)}}\right)(n/d)^{k-2}.
\end{equation}
Now, multiplying \eqref{eq:prod} and~\eqref{eq:ult} together  and  using  \eqref{eq:Xi}, we get finally
\begin{equation*}
\begin{aligned}
B_{k,d}(n)&= \Xi_{k,n}(d)\frac{(n/d)^{k-1}}{(k-1)!}+\Xi_{k,n}(d)\,\theta_7\left(k-1+\frac{e^{2/3}(3e/2)^{k-1}}{\sqrt{2\pi(k-1)}}\right)(n/d)^{k-2}\\
&= \Xi_{k,n}(d)\frac{(n/d)^{k-1}}{(k-1)!}+\theta_8\left(k-1+\frac{e^{2/3}(3e/2)^{k-1}}{\sqrt{2\pi(k-1)}}\right)n^{k-2}d.
\end{aligned}
\end{equation*}
\end{proof}

\begin{proof}[Proof of Theorem~\ref{thm:allcoprime}]
Due to the cases discussed in the Introduction, we may assume that $k\geq 3.$ Let $n\geq e^{k2^{k+2}} $ and write
$$
q(n)=\frac{\log n}{2}\quad {\text{\rm and}}\quad d(n)=\prod_{p\le q(n)} p.
$$
From \cite[Theorem~$6$]{RS1}, we have
$$
\sum_{p\le x} \log p<1.001102 x\quad {\text{\rm for~all}}\quad x>1.
$$
Therefore
\begin{equation}
\label{d:bound}
d(n)\leq \exp(1.001102 q(n))\leq n^{1.001102/2}<n^{0.5006}.
\end{equation}
Let
 \begin{equation*}
 \begin{aligned}
 {\mathcal B}'_{k}(n)= \{(x_1,\ldots,x_k)\in \mathcal{K}_{k-1}\ :&\  \text{there exist} \  i, j\in \{1,\ldots,k\} \textrm{ with }i\neq j, \textrm{ and }p\\\nonumber & \text{ with } p>q(n) \textrm{ and }\ p\mid \gcd(x_i,x_j)\}\end{aligned}\end{equation*}
and $B'_{k}(n)=\# \mathcal{B}'_k(n).$ Using Definition~\ref{def:B2k}, we have
$$
\mathcal{B}_k(n)=\mathcal{B}_{k,\,d(n)}(n) \setminus{\mathcal B}'_{k}(n),
$$
and, in particular,
\begin{equation}\label{end}
{B}_k(n)=B_{k,\,d(n)}(n)+ \theta_1 B'_{k}(n).
\end{equation}
Estimating both $B_{k,\,d(n)}(n)$ and $B'_{k}(n)$, we will see that the main part of ${B}_k(n)$ is given by $B_{k,\,d(n)}(n)$. 

First, we claim that
\begin{equation}
\label{B1k}
B'_k(n)=\theta_2\frac{24n^{k-1}}{\log n}.
\end{equation}
To get an element of ${\mathcal B}'_{k}(n)$, the pair $\{i,j\}$ can be chosen in  $\binom{k}{2}$ ways. Once the pair $\{i,j\}$ is chosen and the prime $p>q(n)$ is fixed, we see that  $x_i$ and $x_j$ are both multiples of $p$ of magnitude at most $n$. Thus, the ordered pair $(x_i,x_j)$ can be chosen in at most $(n/p)^2$ ways. Once the pair $(x_i,x_j)$ is chosen, we have
$$
\sum_{1\le \ell\le  k, \ell\not\in \{i,j\}} x_{\ell}=n-(x_i+x_j).
$$
Therefore, the number of choices for the remaining summands $x_{\ell}$ is the number of $(k-2)$-compositions of $n-(x_i+x_j)$, that is,
$$
\binom{n-(x_i+x_j)-1}{k-3}\le \frac{n^{k-3}}{(k-3)!}.
$$
Summing up, we obtain
$$
 B'_{k}(n)\le \sum_{p>q(n)} \binom{k}{2}\frac{n^2}{p^2}\frac{n^{k-3}}{(k-3)!}\le \frac{n^{k-1} k(k-1)}{2(k-3)!} \sum_{p>q(n)}\frac{1}{p^2}.
$$
Observe now that
\begin{eqnarray}\nonumber
\sum_{p>q(n)} \frac{1}{p^2}  &\leq& \frac{1}{q(n)^2}+\int_{q(n)}^{\infty} \frac{dt}{t^2}
=\frac{1}{q(n)^2}+\left(-\frac{1}{t}\right)\Big|_{q(n)}^{\infty}\\ \label{4logn}
&=&\frac{1}{q(n)^2}+\frac{1}{q(n)}\le \frac{4}{\log n}.
\end{eqnarray}
This gives
\begin{equation*}
\label{eq:B1}
 B'_{k}(n)\le \frac{4 k(k-1)}{2(k-3)!}\frac{n^{k-1}} {\log n}\le \frac{24n^{k-1}}{\log n},
\end{equation*}
where for the last inequality we used the fact that
$$
\frac{2k(k-1)}{(k-3)!}\leq 24\quad {\text{\rm for~ all}}\quad k\geq 3,
$$
which proves~\eqref{B1k}.

Next, we estimate $B_{k,d(n)}(n) .$
Using Proposition \ref{JJ}, we have
\begin{equation}
\label{eq:cardB2}
 B_{k,d(n)}(n)  =\Xi_{k,n}(d(n))
\frac{(n/d(n))^{k-1}}{(k-1)!} +\theta_3\left( k-1+\frac{e^{2/3}(3e/2)^{k-1}}{\sqrt{2\pi(k-1)}} \right)n^{k-2}d(n).
\end{equation}
Extending the product from the main term $M$ of ~\eqref{eq:cardB2} to all primes, we get
\begin{eqnarray}
\label{eq:last}\nonumber
M&=&\frac{n^{k-1}}{(k-1)!}\frac{\Xi_{k,n}(d(n))}{d(n)^{k-1}}=\frac{n^{k-1}}{(k-1)!}\,\prod_{p\mid n, p\leq q(n)}\frac{\eta_k(p)}{p^{k-1}}\,\prod_{p\nmid n, p\leq q(n)}\frac{\delta_k(p)}{p^{k-1}}\\\nonumber
& = &\frac{n^{k-1}}{(k-1)!}\prod_{p\mid n}\frac{\eta_k(p)}{p^{k-1}}\hspace{-0.1cm}\prod_{p\mid n, p>q(n)}\!\frac{p^{k-1}}{\eta_k(p)}\prod_{p}\frac{\delta_k(p)}{p^{k-1}}\prod_{p\mid n}\frac{p^{k-1}}{\delta_k(p)}\hspace{-0.1cm}\prod_{p\nmid n,p>q(n)}\frac{p^{k-1}}{\delta_k(p)}\\\nonumber
& = &D_k g_k(n)\frac{n^{k-1}}{(k-1)!}\, \prod_{p\mid n, p>q(n)}
 \frac{p^{k-1}}{\eta_k(p)}\, \prod_{p\nmid n, p>q(n)} \frac{p^{k-1}}{\delta_k(p)}\\
& = & D_kg_k(n)\frac{n^{k-1}}{(k-1)!}E_k(n),
\end{eqnarray}
(where in the third equality we used the relation~\eqref{etadelta} between  $\eta_k$ and $\delta_k$).
We now estimate the error term
$$
E_k(n)=\prod_{p\mid n, p>q(n)}
\frac{p^{k-1}}{\eta_k(p)} \prod_{p\nmid n, p>q(n)} \frac{p^{k-1}}{\delta_k(p)}.
$$

Since $q(n)\geq k2^{k+1}>\sqrt{2^kk}$, from Lemma~\ref{modp}~f) and g), we get that
$$
0\leq \max\left\{\log\left(\frac{p^{k-1}}{\eta_k(p)}\right),\ \log\left(\frac{p^{k-1}}{\delta_k(p)}\right)\right\}\leq \log\left(1+\frac{k2^{k}}{p^2}\right)\leq \frac{k2^k}{p^2},$$
for any $p>q(n).$ In particular, $E_k(n)\geq 1$ and using ~\eqref{4logn}, we get
$$
0\leq\log (E_k(n))\leq k2^{k}\sum_{p>q(n)} \frac{1}{p^2}\leq \frac{2^{k+2} k}{\log n}\leq 1.
$$
 Recalling that $e^y\leq 1+2y$ for any $0\leq y\leq 1,$  we reach finally
\begin{equation}\label{E bound}
E_k(n)=1+\theta_4\frac{2^{k+3} k}{\log n}.
\end{equation}

Now we go back to the main term $M$. By
 Theorem~\ref{thm:3}, $1/(2k)<g_k(n)<2k$ and $D_k< 1.$ Then, using \eqref{eq:last} and~\eqref{E bound}, we obtain
\begin{eqnarray}
\label{eq:mainB2}
M&=&
D_k g_k(n)\frac{n^{k-1}}{(k-1)!}\left(1+\theta_4\frac{2^{k+3} k}{\log n}\right)\nonumber\\
& = &D_k g_k(n)\frac{n^{k-1}}{(k-1)!} + \theta_5\frac{2^{k+4} k^2}{(k-1)!} \frac{n^{k-1}}{\log n}\nonumber\\
& = &D_k g_k(n)\frac{n^{k-1}}{(k-1)!}+682.7\theta_6\frac{n^{k-1}}{\log n},
\end{eqnarray}
where we used the fact that
$$
\frac{2^{k+4}k^2}{(k-1)!}\leq 682.7\quad {\text{\rm for~all}}\quad k\geq 3.
$$
We now estimate the error in ~\eqref{eq:cardB2}.  First of all observe that since $\log n\leq n^{\alpha}$  holds for any $\alpha\geq e^{-1},$ we surely have $\log n\leq n^{0.3994}$ and thus, using \eqref{d:bound},
\begin{eqnarray*}
n^{k-2}d(n) \leq  n^{k-1}n^{-0.4994}=
\frac{1}{n^{0.1}}\frac{\log n}{n^{0.3994}}\frac{n^{k-1}}{\log n}\leq \frac{1}{n^{0.1}}\frac{n^{k-1}}{\log n}.
\end{eqnarray*}
Since $n\geq e^{k2^{k+2}},$ we have
$$
\left( k-1+\frac{e^{2/3}(3e/2)^{k-1}}{\sqrt{2\pi(k-1)}} \right)\frac{1}{n^{0.1}}\leq \frac{k-1+\frac{e^{2/3}(3e/2)^{k-1}}{\sqrt{2\pi(k-1)}}}{e^{\frac{k2^{k+2}}{10}}},
$$
which is a decreasing function of $k$ whose values is always strictly less than $0.002.$

This says that  the error in~\eqref{eq:cardB2}  can be written as $0.002\theta_7{n^{k-1}}/{\log n}$.
Summing up,  considering~\eqref{end},~\eqref{B1k} and~\eqref{eq:mainB2} and noting that $682.7+24+0.002<707$, we find 
$$
B_k(n)=D_kg_k(n)\frac{n^{k-1}}{(k-1)!}+707 \theta_8 \frac{n^{k-1}}{\log n},
$$
which is what we wanted.
\end{proof}

\end{document}